\theoremstyle{definition}
\newtheorem{definition}{Definition}[section]
\theoremstyle{plain}
\newtheorem{theorem}[definition]{Theorem}
\newtheorem{case}{Case}
\newtheorem{conjecture}[definition]{Conjecture}
\newtheorem{corollary}[definition]{Corollary}
\newtheorem{lemma}[definition]{Lemma}
\theoremstyle{remark}
\newtheorem{example}[definition]{Example}
\newtheorem{remark}[definition]{Remark}
\newtheorem*{claim}{Claim}
\newcommand{\OO}[1]{\mathcal{O}_{#1}}
\newcommand{\PP}{\mathbb{P}}
\newcommand{\Lscr}{L} 
\newcommand{\Lcal}{\mathcal{L}}
\newcommand{\Mscr}{\mathcal{M}}
\newcommand{\Fscr}{\mathscr{F}}
\newcommand{\OC}{\text{\upshape{O\!C}}}
\newcommand{\mfrak}{\mathfrak{m}}
\newcommand{\Wrd}{W^r_d}
\newcommand{\Spec}{\text{\upshape{Spec}}}
\newcommand{\coker}{\text{\upshape{coker}}}
\newcommand{\Supp}{\text{\upshape{Supp}}}
\newcommand{\Sing}{\text{\upshape{Sing}}}
\newcommand{\Hom}{\text{\upshape{Hom}}}
\newcommand{\rank}{\text{\upshape{rank}}}
\newcommand{\Rat}{\text{\upshape{Rat}}}
\newcommand{\Prin}{\text{\upshape{Prin}}}
\newcommand{\Pic}{\text{\upshape{Pic}}}
\begin{document}

\title{The Osculating cone to special Brill-Noether loci}
\author{Michael Hoff}
\address{Universit\"at des Saarlandes, Campus E2 4, D-66123 Saarbr\"ucken, Germany}
\email{hahn@math.uni-sb.de}
\author{Ulrike Mayer}
\email{mayer@math.uni-sb.de}

\keywords{osculating cones, infinitesimal deformations, Brill-Noether theory, Torelli-type theorem}
\subjclass[2010]{14B10, 13D10, 14H51}

\begin{abstract}
We describe the osculating cone to Brill-Noether loci $W^0_d(C)$ 
at smooth isolated points of  $W^1_d(C)$ for a general canonically embedded curve $C$ of even genus $g=2(d-1)$. 
In particular, we show that the canonical curve $C$ is a component of the osculating cone.
The proof is based on techniques introduced by George Kempf.
\end{abstract}

\maketitle

\section{Introduction}
\label{introduction}

In Brill-Noether theory, one studies the geometry of Brill-Noether loci $W^r_d(C)$ for a curve $C$, i.e., 
schemes whose closed points consist of the set 
$$
\{ \Lscr \in \Pic(C)\mid \deg \Lscr =d \text{ and } h^0(C,\Lscr)\geq r+1 \}\subset \Pic^d(C)
$$
of linear series of degree $d$ and dimension at least $r+1$. 
For a general curve, the dimension of $\Wrd(C)$ is equal to the Brill-Noether Number 
$
\rho(g,d,r)=g-(r+1)(g-d+r)
$ 
by the Brill-Noether Theorem (see \cite{GH80}). 
In this paper, we study the local structure of $W^0_d(C)$ near points of $W^1_d(C)$.

The Brill-Noether locus $W^0_d(C)\subset \Pic^d(C)$ is given locally around a
point $\Lscr\in \Wrd(C)$ with $h^0(C,\Lscr)=r+1$ by the maximal minors of the matrix
$(f_{ij})$ of regular functions vanishing at $\Lscr$ arising from
\vspace{-2mm}
$$
R\pi_* \mathscr{L}: 0 \longrightarrow \OO {\Pic^d(C),\Lscr}^{h^0(C,\Lscr)} \stackrel{(f_{ij})}{\longrightarrow} 
\OO {\Pic^d(C),\Lscr}^{h^1(C,\Lscr)} \longrightarrow 0,
$$
the only non-trivial part of the direct image complex of the Poincar\'{e} bundle $\mathscr{L}$ 
on $C\times \Spec(\OO {\Pic^d(C),\Lscr})$ (see \cite{Kem83}).
Using flat coordinates on the universal covering of $\Pic^d(C)\cong H^1(C,\OO C)/ H^1(C,\mathbb{Z})$, 
we may expand 
$$
f_{ij}\ =\ l_{ij}\  +\  q_{ij}\  +\  \emph{higher order terms},
$$
where $l_{ij}$ and $q_{ij}$ are linear and quadratic forms on the tangent space $H^1(C,\OO C)$ of 
$\Pic(C)$ at $\Lscr$, respectively.
A first local approximation of $W^0_d(C)$ is the tangent cone 
$\mathcal{T}_{\Lscr}(W^0_d(C))$ whose ideal is generated by the maximal minors of $(l_{ij})$ 
by Riemann-Kempf's Singularity Theorem (see \cite{Kem73}). 
Recall that the analytic type of $W^0_d(C)$ at $\Lscr$ is completely determined by the tangent cone. 
As its subvariety, we will study the osculating cone of order $3$ to $W^0_d(C)$ at the point $\Lscr$, 
denoted by $\OC_3 (W^0_d(C),\Lscr)$, a better approximation than the tangent cone 
in the given embedding into $H^1(C,\OO C)$.

In \cite{Kem86}, Kempf showed for a canonically embedded curve of genus $4$, 
the osculating cone of order $3$ to $W^0_3(C)$ at a point $\Lscr\in W^1_3(C)$ coincides with the curve. 
Using Kempf's cohomology obstruction theory, 
Kempf and Schreyer (\cite{KS88}) studied the osculating cone to $W^0_{g-1}(C)$ for curve of genus $g\geq 5$  
and proved a local Torelli Theorem (see Conjecture \ref{conjecture}). 
Schreyer conjectured that the geometry of the osculating cone to other Brill-Noether loci $(d\neq g-1)$
is rich enough to recover the curve $C$. 
Using methods developed in \cite{Kem86} and \cite{KS88}, 
we will give a positive answer for smooth pencils $\Lscr\in W^1_d(C)$ with 
$h^0(C,\Lscr)=2$ and $\dim(W^1_d(C))=\rho(g,d,1)=0$.

To explain our main theorem, we introduce some notation. 
Let $C$ be a smooth canonically embedded curve of genus $g=2(d-1)\geq 4$ over 
an algebraically closed field $\Bbbk$ of characteristic different from $2$. 
For a general curve, the Brill-Noether locus $W^1_d(C)$ is then zero-dimensional, 
and for every $\Lscr\in W^1_d(C)$, the multiplication map 
\vspace{-4mm}
\begin{align}
\label{multMap}
 \mu_{\Lscr}: H^0(C,\Lscr) \otimes H^0(C,\omega_C \otimes \Lscr^{-1}) 
              \stackrel{\cong}{\longrightarrow} H^0(C, \omega_C)
\end{align}
is an isomorphism by \cite{Gie82}. 
Hence, $W^1_d(C)$ consists of $\frac{(2d-2)!}{d!\cdot (d-1)!}$ reduced isolated points.
Let $\Lscr$ be a point of $W^1_d(C)$ such that $\mu_{\Lscr}$ is an isomorphism. 

By our choice of $\Lscr$, the projectivization of the tangent cone has a simple description 
which is important for our considerations.
Recall that the projectivization of the tangent cone and the osculating cone live naturally 
in the canonical space $\PP^{g-1}:=\PP(H^0(C,\omega_C)^*)$.
By Riemann-Kempf's Singularity Theorem, $\PP \mathcal{T}_{\Lscr}(W^0_d(C))$ is geometrically 
the scroll swept out by $g^1_d=|\Lscr|$. 
It is the union of planes $\overline{D}=\PP^{d-2}$ spanned by the points of the divisor $D\in |\Lscr|$.
Furthermore, the isomorphism of the multiplication map yields that the scroll is smooth and 
hence, coincides with the Segre embedding $\PP^1\times \PP^{d-2} \subset \PP^{g-1}$. 
We get the following diagram 
\vspace{-1mm}
$$
\begin{xy}
 \xymatrix{
 \PP(\OC_3 (W^0_d(C),\Lscr)) \ar@{^(->}[r] \ar[rd] & \PP^1\times \PP^{d-2} \ar[d]^{\pi} \ar@{^(->}[r] & \PP^{g-1} \\
   & \PP^1 & 
 }
\end{xy}
\vspace{-1mm}
$$
where $\pi$ is the projection to the first component. 
A point in $\PP^1$ corresponds to a divisor $D\in |\Lscr|$ and the fiber over $D$ is 
$\pi^{-1}(D)= \overline{D}$. 
Our main theorem is a characterization of the intersection of 
the osculating cone and a fiber $\overline{D}$.

\begin{theorem}
\label{mainTheorem}
 Let $C$ be a smooth canonically embedded curve of even genus $g=2(d-1)\geq 4$ and 
 let $\Lscr \in W^1_d(C)$ such that the multiplication map $\mu_{\Lscr}$ is an isomorphism. 
 If $\text{\upshape char}(\mathbbm{k})=0$, then the fiber $\overline{D}$ of the projection $\pi$ intersects 
 the osculating cone $\PP(\OC_3(W^0_d(C),\Lscr))$ in the union of all intersections  
 $\overline{D_1}\cap \overline{D_2}$ for each decomposition $D = D_1 + D_2$ into nonzero effective divisors. 
 If $\text{\upshape char} (\mathbbm{k})> 0$, then the above is true if 
 $\pi|_C: C\rightarrow \PP^1$ is tamely ramified. 
 In particular, the osculating cone consists of $2^{d-1}-1$ points in a general fiber.
\end{theorem}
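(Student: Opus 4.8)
The statement is local at $\Lscr$ and fibrewise over $\PP^1$, so I would fix a divisor $D=P_1+\dots+P_d\in|\Lscr|$ and work inside the single fibre $\overline D=\PP^{d-2}$. First I would record the determinantal picture: near $\Lscr$ the locus $W^0_d(C)$ is the rank-$\le 1$ locus of the $(d-1)\times 2$ matrix $(f_{ij})$, with $f_{ij}=l_{ij}+q_{ij}+\cdots$. Choosing bases $s_0,s_1$ of $H^0(C,\Lscr)$ and $t_1,\dots,t_{d-1}$ of $H^0(C,\omega_C\otimes\Lscr^{-1})$, and the flat coordinates $x_{ia}$ dual to the basis $\{s_it_a\}$ of $H^0(C,\omega_C)$ (a basis precisely because $\mu_{\Lscr}$ is an isomorphism), the base-point-free pencil trick identifies the linear part with cup product, so that $l_{ai}=x_{ia}$, the tangent cone is the Segre scroll, and the kernel of $l(v)\colon H^0(C,\Lscr)\to H^1(C,\Lscr)$ over the ruling $\overline D$ is spanned by the section $s_D$ with $\Div(s_D)=D$; equivalently $[v]\in\overline D$ iff $v\perp H^0(\omega_C(-D))=s_D\cdot H^0(\omega_C\otimes\Lscr^{-1})$.

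Next I would express membership in $\OC_3$ as the vanishing of a single second-order obstruction. Inside the tangent cone, $\PP(\OC_3(W^0_d(C),\Lscr))$ is cut out by the cubic terms $m^{(3)}_{ab}=\bigl(l_{a0}q_{b1}-l_{a1}q_{b0}\bigr)+\bigl(q_{a0}l_{b1}-q_{a1}l_{b0}\bigr)$ of the $2\times 2$ minors. Following Kempf's cohomology obstruction theory (\cite{Kem86},\cite{KS88}) I would reinterpret $q$ as a Massey-type product: for $[v]\in\overline D$ the relations $v\cup v=0$ (on a curve) and $v\cup s_D=0$ allow one to form the triple product $\langle v,v,s_D\rangle\in\coker\bigl(v\cup-\colon H^0(\Lscr)\to H^1(\Lscr)\bigr)$, and the equations $m^{(3)}_{ab}(v)=0$ become equivalent to $\langle v,v,s_D\rangle=0$. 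Dually, pairing with $H^0(\omega_C\otimes\Lscr^{-1})$ turns this into the vanishing of a quadratic form $Q_v$ on the space $H^0(\omega_C(-D))$ of canonical forms through $D$.

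The heart of the argument is the computation of $Q_v$. Writing $v=\sum_i a_i v_{P_i}$ in terms of the point functionals $v_{P_i}\in H^1(C,\OO C)=H^0(C,\omega_C)^*$ (evaluation at $P_i$) spanning $\overline D$, and representing the triple product by a primitive $\eta$ of $v\,\omega_0$, a local residue computation at each $P_j$ (here $\mathrm{char}\,\mathbbm{k}\ne 2$ produces the factor $\tfrac12$, and tame ramification of $\pi|_C$ guarantees the clean local model isolating the leading coefficient) collapses the double sum to the diagonal, giving $Q_v(\omega_0)=\mathrm{const}\cdot\sum_j a_j^2\,\omega_0'(P_j)$. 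Since $\omega_0=s_D t_0$ has $\omega_0'(P_j)=\kappa_j\,t_0(P_j)$ with $\kappa_j\ne 0$, the condition $Q_v\equiv 0$ on $H^0(\omega_C(-D))$ reads $\sum_j a_j^2\kappa_j\,t_0(P_j)=0$ for all $t_0$, i.e.\ the vector $(a_j^2\kappa_j)$ is proportional to the unique evaluation relation $(\gamma_j)$ of $H^0(\omega_C\otimes\Lscr^{-1})$ at $P_1,\dots,P_d$. Thus $[v]\in\overline D$ lies on the osculating cone exactly when $a_j^2=\lambda\,\delta_j$ for a common scalar $\lambda$, where $\delta_j=\gamma_j/\kappa_j$.

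Finally I would read off the geometry. Taking square roots (legitimate since $\mathrm{char}\,\mathbbm{k}\ne 2$) shows the solutions are the ``sign vectors'' $a_j=\epsilon_j\sqrt{\delta_j}$, $\epsilon_j\in\{\pm 1\}$, modulo overall scale and the evaluation relation $\sum_i c_i v_{P_i}=0$ of $H^0(C,\omega_C)$. In the rescaled frame this relation is the all-ones vector, so via the shift $\epsilon\mapsto\tfrac12(\mathbbm{1}+\epsilon)$ the sign vectors become the $0/1$ vectors $[\mathbbm{1}_S]$ for $\emptyset\ne S\subsetneq\{1,\dots,d\}$, with $S$ and $S^c$ giving the same point and the all-ones choice falling into the relation and being discarded. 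A direct check identifies $[\mathbbm{1}_S]$ with the point $\overline{D_1}\cap\overline{D_2}$ for $D_1=\sum_{j\in S}P_j$, $D_2=\sum_{j\notin S}P_j$ (these complementary spans meet in one point by the dimension count $(|S|-1)+(d-|S|-1)=d-2$ in general position). Hence $\overline D\cap\PP(\OC_3)=\bigcup_{D=D_1+D_2}\overline{D_1}\cap\overline{D_2}$, and for a general reduced fibre the points are distinct and number $(2^d-2)/2=2^{d-1}-1$. The main obstacle is the middle step: pinning down $q$ through the obstruction theory and the residue computation, and in particular verifying that the evaluation relations match so that the obstruction cuts out precisely the decomposition points; this is exactly where $\mathrm{char}\,\mathbbm{k}\ne 2$ and tameness are indispensable.
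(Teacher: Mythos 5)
Your overall strategy is the same as the paper's: both pass through Kempf's obstruction theory (membership in $\OC_3$ as the vanishing of a second-order obstruction, the paper's Corollary \ref{criterionOscCone2}), reduce fibrewise to an explicit quadratic system, and identify its projective solutions, modulo the one span relation, with the decomposition points. However, there are two genuine gaps.

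First, you only treat reduced fibers. You write $D=P_1+\dots+P_d$ and coordinatize $\overline D$ by evaluation functionals $v_{P_i}$, which span $\overline D$ only when the $P_i$ are distinct; for a non-reduced $D=\sum k_ip_i$ one needs principal parts (equivalently, derivative functionals along the osculating flags of $C$). But the theorem is a statement about \emph{every} fiber, and its content at fibers containing ramification points of $\pi|_C$ is exactly where the tameness hypothesis enters and where new phenomena appear: there the intersection acquires the positive-dimensional component $\overline{\sum_i\lfloor k_i/2\rfloor p_i}=\overline{D_1}\cap\overline{D_2}$. In the paper this is half of the proof: one constructs a special basis $\beta_j=\beta_1^{\,j}$, $\beta_{k}=h|_{p}$ of $H^0(C,\OO C(D)|_{k\cdot p})$ by extracting a $k$-th root of $h$ (this is precisely where $\mathrm{char}(\mathbbm{k})\nmid k_i$ is used), derives the system $\sum_{j=i}^{k}\lambda_j\lambda_{k+i-j}+2c\lambda_i=0$, and solves it in two cases ($c\neq 0$ and $c=0$), the second giving the higher-dimensional components. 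Your parenthetical ``tame ramification \dots guarantees the clean local model'' supplies no mechanism for any of this; as written, your argument can at best reach the ``in particular'' clause about general fibers, not the full statement.

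Second, the middle step that you yourself flag as ``the main obstacle'' is not only unfinished but, as stated, off. The second-order obstruction is not a well-defined quadratic form on $H^0(C,\omega_C(-D))$: by Corollary \ref{criterionOscCone2} it lives in $H^1(C,\Lscr)/(H^0(C,\Lscr)\cup[\beta])$, so the correct condition carries a free parameter coming from the choice of $f_1=af_0+cg_0$, namely ``there exists $c$ with $[\tfrac{f_0\beta^2}{2}+cg_0\beta]=0$,'' which in coordinates reads $a_j^2+2ca_j=0$ (a $0/1$-vector condition), not $a_j^2\kappa_j=\lambda\gamma_j$ (a sign-vector condition). These two cut out the same projective points only after shifting by half the span relation, and that requires the compatibility you call ``the evaluation relations match'' ($\delta_j$ proportional to the square of the relation vector), which you do not verify: without it, the decomposition points $[\mathbbm{1}_S]$ simply do not satisfy your equations. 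In the paper this is handled transparently: the constant $c$ is present from the start, and the identification of the solutions with the points $\overline{D_1}\cap\overline{D_2}$ (and of $I$ with $I^c$) uses the relation $(\ast)$, $[\sum_i h|_{p_i}]=0$, which is the residue theorem applied to $g_0^2t_0/f_0$ --- exactly the computation your sketch postpones. So the proposal is a plausible road map for the unramified case with its essential computation missing, and it does not reach the ramified fibers at all.
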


In \cite{Mayer}, the second author showed that all intersection points are contained 
in the osculating cone for a general fiber.
For $g=4$, this generalizes the main theorem of \cite{Kem86}. 

An immediate consequence of Theorem \ref{mainTheorem} is a Torelli-type Theorem for 
osculating cones to $W^0_d(C)$ at isolated singularities.

\begin{corollary}
 The general canonical curve $C$ of genus $g=2(d-1)\geq 4$ is an irreducible component of 
 the osculating cone $\PP(\OC_3(W^0_d(C),\Lscr))$ of order $3$ to $W^0_d(C)$ at an arbitrary point $\Lscr\in W^1_d(C)$.
\end{corollary}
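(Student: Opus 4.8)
The plan is to deduce the corollary directly from Theorem \ref{mainTheorem} by a fibre-dimension argument, viewing the osculating cone $Z := \PP(\OC_3(W^0_d(C),\Lscr))$ as a closed subvariety of the smooth scroll $\PP^1\times\PP^{d-2}$ and exploiting the projection $\pi$ to $\PP^1$. For a general curve, $\mu_{\Lscr}$ is an isomorphism for every $\Lscr\in W^1_d(C)$, so Theorem \ref{mainTheorem} applies at an arbitrary such point. Two things must be shown: (i) the canonical curve $C$ is contained in $Z$, and (ii) $C$ is not properly contained in any larger irreducible subvariety of $Z$, so that $C$ is a component.

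For (i) I would argue pointwise. Note first that $C$ lies in the scroll, since every point of $C$ lies on some fibre $\overline{D}$ (the divisor of $|\Lscr|$ through it). Fix a general point $p\in C$ and let $D\in|\Lscr|$ be the divisor of the pencil containing $p$, so $D=p+D'$ with $D'$ effective of degree $d-1$; this is a decomposition into nonzero effective divisors with $\overline{D_1}=\{p\}$ and $\overline{D_2}=\overline{D'}$. By the uniform position principle (in characteristic $0$), for general $D$ the $d$ points of $D$ are in linearly general position in $\overline{D}=\PP^{d-2}$, so the $d-1$ points of $D'$ already span $\overline{D'}=\PP^{d-2}=\overline{D}$; in particular $p\in\overline{D'}$ and hence $\overline{D_1}\cap\overline{D_2}=\{p\}$. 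Theorem \ref{mainTheorem} then places $p$ in $Z$. Since this holds for $p$ in a dense subset of $C$ and $Z$ is closed, we conclude $C\subseteq Z$.

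For (ii) I would use the finiteness of the general fibre. Theorem \ref{mainTheorem} gives that $\pi^{-1}(D)\cap Z$ consists of $2^{d-1}-1$ points for general $D\in\PP^1$. Decomposing $Z$ into irreducible components, each component either dominates $\PP^1$ under $\pi$ or is contracted to a point; by the theorem on the dimension of fibres, a dominant component must have dimension $1$, while a contracted component lies inside a single fibre $\overline{D_0}$. Since $C$ is irreducible it lies in a single component $Z_i$, and since $\pi|_C\colon C\to\PP^1$ is the surjective $g^1_d$-map, $Z_i$ dominates $\PP^1$; hence $\dim Z_i=1$. As $C\subseteq Z_i$ with both irreducible of dimension $1$, we get $C=Z_i$, so $C$ is an irreducible component of $Z$.

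The one genuine subtlety, and the step I would be most careful about, is (i): one must check that the span $\overline{D'}$ of the residual divisor does not drop in dimension upon removing $p$, so that the intersection $\overline{D_1}\cap\overline{D_2}$ is exactly the point $p$ rather than empty. This is precisely where generality of $D$ enters, via the general position of the points of a general divisor in the base-point-free pencil $|\Lscr|$, and it is also what confines the argument to general fibres — in agreement with the hypotheses under which Theorem \ref{mainTheorem} yields the clean point count. Everything else reduces to a routine application of the theorem together with upper semicontinuity of fibre dimension.
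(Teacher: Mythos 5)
Your proof is correct and takes essentially the same route the paper treats as immediate: the degree-one decompositions $D=p+D'$ in Theorem \ref{mainTheorem} show that $C$ lies in the osculating cone, and the finiteness ($2^{d-1}-1$ points) of the general fiber over $\PP^1$ forces the component containing $C$ to be one-dimensional, hence equal to $C$. One simplification for your step (i): you do not need the uniform position principle or characteristic $0$ there, since $|\Lscr|$ is base point free, so $h^0(C,\Lscr(-p))=1$ and geometric Riemann--Roch gives that the residual divisor $D'$ spans all of $\overline{D}$ for every $D\in|\Lscr|$ and every $p\in\Supp(D)$.
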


In the case of $W^1_d(C)$ of positive dimension, we have  
the following local Torelli-type conjecture for Brill-Noether loci.  

\begin{conjecture}
\label{conjecture}
Let $C$ be a general canonically embedded curve of genus $g$ and let $\Lscr\in W^1_d(C)$ be a general point
where $\dim (W^1_d(C))\geq 1$. Let $V=\Sing(\PP \mathcal{T}_{\Lscr}(W^0_d(C)))$. The projection 
$
\pi_V:C\to C'\subseteq \PP^1\times \PP^{h^1(C,\Lscr)-1}
$ 
is birational to the image of $C$. Consider 
 $$
\begin{xy}
 \xymatrix{
 \widetilde{\PP (\OC_3 (W^0_d(C),\Lscr))} \ar[dr]^{\alpha} \ar@{^(->}[r] 
  & \widetilde{\PP \mathcal{T}_{\Lscr}(W^0_d(C))} \ar@{^(->}[r] \ar[d] 
  & \widetilde{\PP^{g-1}} \ar[d]^{\pi_V} \ar[r] 
  & \PP^{g-1}\\
  & \PP^1\times \PP^{ h^1(C,\Lscr)-1}  \ar[r] 
  & \PP^{ 2\cdot h^1(C,\Lscr)-1} &
 }
\end{xy}
$$ 
where $\sim$ denotes the strict transform after blowing up $V$ and 
the vertical maps are induced by the projection from $V$. 
Then, 
\begin{itemize}
 \item [(a)] away from points of $C'$ the fibers of $\alpha$ are smooth or empty and 
 \item [(b)] for a smooth point $p'$ of $C'$ the corresponding point $p$ of $\widetilde{C}$ is 
             the only singular point of the fiber of $\alpha$ over $p'$.
\end{itemize}
\end{conjecture}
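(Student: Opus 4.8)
The plan is to reduce the conjecture to a fibrewise Jacobian computation on the resolved tangent scroll, in the spirit of the proof of Theorem \ref{mainTheorem}, and to show that the locus where the fibres of $\alpha$ fail to be smooth is exactly the strict transform $\widetilde{C}$. First I would set up Kempf's local model: near $\Lscr$ the locus $W^0_d(C)$ is the degeneracy locus of the $h^1(C,\Lscr)\times h^0(C,\Lscr)$ matrix $(f_{ij})$, and its order-$3$ osculating cone is cut out inside the tangent cone by the cubics extracted from $f_{ij}=l_{ij}+q_{ij}+\text{higher order terms}$. Here the linear forms $l_{ij}$ encode the cup-product $H^1(C,\OO C)\otimes H^0(C,\Lscr)\to H^1(C,\Lscr)$, dual to $\mu_{\Lscr}$, whose degeneracy locus is the scroll $\PP\mathcal T_{\Lscr}(W^0_d(C))$, while the quadrics $q_{ij}$ encode Kempf's second-order obstruction. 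Blowing up the vertex $V=\Sing(\PP\mathcal T_{\Lscr}(W^0_d(C)))$ presents $\widetilde{\PP\mathcal T_{\Lscr}(W^0_d(C))}$ as a $\PP^{h^1(C,\Lscr)-1}$-bundle over $\PP^1=|\Lscr|$, whose structure map is the middle horizontal arrow of the diagram to $\PP^1\times\PP^{h^1(C,\Lscr)-1}$.

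Next I would pull the osculating cubics back to this bundle and read off $\widetilde{\PP(\OC_3(W^0_d(C),\Lscr))}$ as the common zero scheme of the resulting family of vertical forms, the parameter of which is recorded by $\alpha$. The fibre of $\alpha$ over a point $q$ is then the intersection of these cubics with the corresponding fibre of the bundle, and by the relative Jacobian criterion it is singular at a point precisely where the vertical Jacobian assembled from the $q_{ij}$ drops rank. The heart of the argument is to identify this vertical degeneracy locus, after projection from $V$, with $\widetilde{C}$: this is the higher-dimensional analogue of the computation in Theorem \ref{mainTheorem}, where over a general $D\in\PP^1$ the splittings $D=p+(D-p)$ with $p$ a single point contribute exactly the points $p$ of $C$ lying in the fibre (the complementary divisor $D-p$ spans the whole fibre $\overline D$), whereas every splitting into two parts of size $\ge2$ contributes a point disjoint from $C$. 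Showing that the trivial splittings account for the singular points of the fibre while the nontrivial ones give smooth points would yield both (a) and (b).

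To conclude I would invoke the asserted birationality of $\pi_V|_C\colon C\to C'$: since $V$ is the vertex locus of the scroll swept out by $|\Lscr|$, a point of $C$ is contracted by projection from $V$ only along a secant direction forced by the pencil, which Brill-Noether generality of $C$ and $\Lscr$ excludes outside a finite set; this pins down $C'$ and makes $\widetilde{C}\to C'$ an isomorphism over the smooth locus of $C'$, so that the unique singular point $p$ of the fibre over a smooth $p'$ is the preimage of $p'$, as required in (b). Exactly as in Theorem \ref{mainTheorem}, the characteristic enters when one differentiates the cubic relations along the fibres, so the positive-characteristic case should again require $\pi|_C\colon C\to\PP^1$ to be tamely ramified.

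I expect the decisive obstacle to be the explicit control of the quadrics $q_{ij}$. In the case $\rho(g,d,1)=0$ treated in Theorem \ref{mainTheorem} the fibres $\overline D$ are cut down to the finitely many points $\overline{D_1}\cap\overline{D_2}$, so the combinatorics of splittings is directly visible; for $\dim W^1_d(C)\ge1$ the fibres of $\alpha$ are positive-dimensional, and one must prove that the second-order data degenerate along $C$ and nowhere else. Making Kempf's second-order obstruction sufficiently explicit — essentially matching the vertical Jacobian of the osculating cubics with the second fundamental form of $C$ relative to the scroll — is where the genuine work lies, and it is also where the generality of $\Lscr$ and the tameness hypothesis must be used.
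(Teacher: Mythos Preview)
The statement you are attempting to prove is labelled \emph{Conjecture} in the paper, and the paper does not prove it. There is therefore no ``paper's own proof'' to compare your attempt against. The paper remarks that the case $d=g-1$ was established by Kempf and Schreyer in \cite{KS88}, and in the final remark of Section~\ref{proofOfTheMainTheorem} it notes only that the methods of Theorem~\ref{mainTheorem} show, for general $D\in|\Lscr|$ when $\dim W^1_d(C)\ge 1$, that the osculating cone \emph{contains} all intersections $\overline{D_1}\cap\overline{D_2}$; the reverse inclusion, and a fortiori the smoothness statements (a) and (b), are left open.

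Your outline is a reasonable programme and is essentially the strategy one would extract from \cite{KS88} and from the present paper, but as written it is not a proof: you yourself flag the missing step, namely making the second-order obstruction $q_{ij}$ explicit enough to identify the vertical Jacobian degeneracy locus with $\widetilde{C}$. Two further cautions. First, your heuristic that the splittings $D=p+(D-p)$ ``account for the singular points of the fibre'' borrows intuition from Theorem~\ref{mainTheorem}, but that theorem lives in the regime $\rho=0$ where the tangent scroll is already smooth ($V=\emptyset$) and the fibres of $\alpha$ are zero-dimensional; transporting this to the positive-dimensional fibres of the conjecture requires an argument you have not supplied. Second, the birationality of $\pi_V|_C$ is part of the \emph{statement} of the conjecture, not something you may simply ``invoke''; your sketch of why it should hold (``Brill--Noether generality excludes secant contractions outside a finite set'') is plausible but not a proof.
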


The origin of the conjecture is the work \cite{KS88} of Kempf and Schreyer. 
The authors proved the conjecture in the case $d=g-1$, 
where $W^0_{g-1}(C)$ is isomorphic to the theta divisor, thus implying a Torelli Theorem for general curves. 
Using the theory of foci, similar results are shown in \cite{CS95}, \cite{CS00}. 

Our local Torelli-type Theorem recovers the original curve as a component of the osculating cone. 
Moreover, we can identify different components of the osculating cone.  
Let $C$ and $\Lscr$ be as in Theorem \ref{mainTheorem}.
Let further $\overline{D}$ be a general fiber of $\pi$ spanned by the divisor $D\in |\Lscr|$ on $C$ of degree $d$. 
Then, the osculating cone consists of $2^{d-1}-1$ points in the fiber $\overline{D}$, 
since there are $2^{d-1}-1$ decompositions of $D$ into nonzero effective divisors 
and each intersection is zero-dimensional.
For $i\leq \lfloor \frac{d}{2} \rfloor$, 
there are $\binom{d}{i}$ points of the osculating cone in the fiber $\overline{D}$ if $i< \frac{d}{2}$ and 
$\frac{1}{2}\binom{d}{i}$ points if $i=\frac{d}{2}$, arising from 
decompositions $D=D_1+D_2$ where $\deg(D_1)=i$ or $\deg(D_1)=d-i$. 
For every $i$, we get a curve $C_i$ as the closure of the union of these points. 
The decomposition of the curve $C_i$ depends on the monodromy group of the covering $\pi|_C: C \to \PP^1$.  
A curve $C_i$ is irreducible if the monodromy group acts transitively on $C_i$. 
Recall that the monodromy group is the full symmetric group $S_d$  for a general curve, thus 
the osculating cone contains a union of $\lfloor \frac{d}{2} \rfloor$ irreducible curves. 
An easy example (see \cite[Proposition 4.1 (b)]{Dal85}) 
where an additional component of the osculating cone decomposes is the following.
For a tetragonal curve of genus $6$ with monodromy group $\mathbb{Z}_4$, 
the trigonal curve $C_2$, with the above notation, decomposes in a rational and a hyperelliptic curve. 

It can also happen that there are further even higher-dimensional components of the osculating cone 
contained in special fibers of $\pi$.

\medskip

The paper is organized as follows. Section \ref{preliminaries} provides basic lemmata which we need later on and 
motivates our main theorem.
In Section \ref{obstructionTheory}, we recall Kempf's cohomology obstruction theory. 
Using this theory, we will give a proof of Theorem \ref{mainTheorem} in Section \ref{proofOfTheMainTheorem}.

 \subsection*{Acknowledgements}
We thank Frank-Olaf Schreyer for bringing this topic to our attention as well as 
for his encouragement and useful conversations.  
The first author was supported by the DFG-grant SPP 1489 Schr. 307/5-2.


\section{Preliminaries and Motivation}
\label{preliminaries}

Throughout this paper, we fix the following notation: 
Let $C$ be a smooth canonically embedded curve of even genus 
$g=2(d-1)\geq 4$ over an algebraically closed field of characteristic $\neq 2$ and 
let $\Lscr\in W^1_d(C)$ be an isolated smooth point of the Brill-Noether locus. 

We refer to \cite{ACGH} for basic results of Brill-Noether theory. 
We deduce two simple lemmata from our assumptions on the pair $(C,L)$.

\begin{lemma}
\label{propertyCupProduct}
The linear system $|\Lscr|$ is a base point free pencil with 
$H^1(C,\Lscr^2)=0$ and surjective multiplication map $\mu_{\Lscr}$.
\end{lemma}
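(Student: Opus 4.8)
The plan is to extract all four assertions from the single standing hypothesis that $\Lscr$ is an isolated \emph{smooth} point of $W^1_d(C)$, together with $\rho(g,d,1)=0$, using Riemann--Kempf's Singularity Theorem and the base-point-free pencil trick. The logical order matters: I would first pin down that the system is a pencil, then get surjectivity (hence base-point-freeness) of $\mu_{\Lscr}$, and only then deduce $H^1(C,\Lscr^2)=0$.

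First I would show $|\Lscr|$ is a pencil, i.e. $h^0(C,\Lscr)=2$. If instead $h^0(C,\Lscr)\geq 3$, then $\Lscr\in W^2_d(C)$, and by Riemann--Kempf the multiplicity of $W^1_d(C)$ at such a point exceeds $1$, so $\Lscr$ would be a singular point, contradicting the hypothesis. Hence $h^0(C,\Lscr)=2$, and by Riemann--Roch on the curve of genus $g=2(d-1)$ one gets $h^1(C,\Lscr)=h^0(C,\omega_C\otimes\Lscr^{-1})=d-1$. Next I would use that, now that $h^0(C,\Lscr)=2$, the Zariski tangent space to $W^1_d(C)$ at $\Lscr$ is the annihilator $(\im\mu_{\Lscr})^{\perp}\subseteq H^1(C,\OO C)$, so that $\dim T_{\Lscr}W^1_d(C)=g-\rank\mu_{\Lscr}$. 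Smoothness together with isolatedness forces $\dim T_{\Lscr}W^1_d(C)=0$, whence $\rank\mu_{\Lscr}=g$ and $\mu_{\Lscr}$ is surjective. Since source and target both have dimension $2(d-1)=g$, the map $\mu_{\Lscr}$ is in fact an isomorphism, and in particular injective. Base-point-freeness is then immediate: were $p$ a base point of $|\Lscr|$, every section of $\Lscr$ would vanish at $p$, so $\im\mu_{\Lscr}\subseteq H^0(C,\omega_C(-p))\subsetneq H^0(C,\omega_C)$, contradicting surjectivity.

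Finally, $H^1(C,\Lscr^2)=0$ would follow from injectivity of $\mu_{\Lscr}$ via the base-point-free pencil trick. Choosing a basis of the (now base-point-free) pencil $H^0(C,\Lscr)$ and tensoring the Koszul sequence $0\to\Lscr^{-1}\to\OO C^{\oplus 2}\to\Lscr\to 0$ by $\omega_C\otimes\Lscr^{-1}$ gives
\[
0\longrightarrow \omega_C\otimes\Lscr^{-2}\longrightarrow (\omega_C\otimes\Lscr^{-1})^{\oplus 2}\longrightarrow \omega_C\longrightarrow 0,
\]
whose associated long exact sequence identifies $\ker\mu_{\Lscr}$ with $H^0(C,\omega_C\otimes\Lscr^{-2})$. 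Injectivity of $\mu_{\Lscr}$ thus yields $H^0(C,\omega_C\otimes\Lscr^{-2})=0$, and Serre duality gives $H^1(C,\Lscr^2)\cong H^0(C,\omega_C\otimes\Lscr^{-2})^{*}=0$.

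I expect the only genuine subtlety to lie in the first step: correctly invoking Riemann--Kempf to exclude $h^0(C,\Lscr)\geq 3$, which is precisely what licenses the tangent-space identification $\dim T_{\Lscr}W^1_d(C)=g-\rank\mu_{\Lscr}$ (valid only once $h^0(C,\Lscr)=r+1=2$ is known). Everything downstream is then a routine dimension count plus a single application of the base-point-free pencil trick, so the crux is really packaging the smoothness hypothesis into the statement $h^0(C,\Lscr)=2$.
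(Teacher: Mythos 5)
Your proposal is correct and follows essentially the same route as the paper: deduce that $\mu_{\Lscr}$ is an isomorphism from smoothness, get base point freeness from surjectivity of $\mu_{\Lscr}$ together with base point freeness of $|\omega_C|$, and then obtain $H^1(C,\Lscr^2)=0$ via the base point free pencil trick and Serre duality. The only difference is that you spell out (via $\Sing(W^1_d(C))=W^2_d(C)$ and the tangent-space identification $T_{\Lscr}W^1_d(C)=(\im\mu_{\Lscr})^{\perp}$) the step the paper simply asserts, namely that an isolated smooth point forces $\mu_{\Lscr}$ to be an isomorphism of $g$-dimensional spaces.
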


\begin{proof}
 Since $\Lscr$ is a smooth point and $g=2(d-1)$, the multiplication map 
 $
 \mu_{\Lscr}: H^0(C,\Lscr) \otimes H^0(C,\omega_C \otimes \Lscr^{-1}) \rightarrow H^0(C, \omega_C)
 $
 is an isomorphism between vector spaces of the same dimension.  
 Furthermore, $|\Lscr|$ is a base point free pencil since $H^0(C,\omega_C)$ has no base points. 
 By the base point free pencil trick, we deduce the vanishing 
 $H^1(C,\Lscr^2)^{*}=H^0(C,\omega_C\otimes \Lscr^{-2})=\ker(\mu)=0$ from the exact sequence 
 $$
 0 \longrightarrow \bigwedge^2 H^0(C,\Lscr)\otimes \omega_C \otimes \Lscr^{-2} \longrightarrow 
 H^0(C,\Lscr) \otimes \omega_C \otimes \Lscr^{-1} \longrightarrow \omega_C \longrightarrow 0.
 $$
\end{proof}

\begin{lemma}
\label{propertyL2}
 For any point $p\in C$, we have 
 $$
 H^0(C,\Lscr^2(-p)) = H^0(C,\Lscr).
 $$
\end{lemma}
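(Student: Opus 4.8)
The plan is to read the asserted equality as the statement that multiplication by a suitable section of $\Lscr$ identifies the two spaces, and then to reduce everything to a dimension count. Since $\Lscr$ is a base point free pencil with $h^0(C,\Lscr)=2$ (Lemma \ref{propertyCupProduct}), the evaluation $H^0(C,\Lscr)\to \Lscr|_p$ at $p$ is surjective, so there is a section $t\in H^0(C,\Lscr)$, unique up to scalar, vanishing at $p$. For every $s\in H^0(C,\Lscr)$ the product $s\cdot t$ is a section of $\Lscr^2$ vanishing at $p$, hence lies in $H^0(C,\Lscr^2(-p))$; since $C$ is integral and $t\neq 0$, the resulting linear map
$$
\cdot\, t\colon H^0(C,\Lscr)\longrightarrow H^0(C,\Lscr^2(-p)),\qquad s\mapsto s\cdot t,
$$
is injective. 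Thus it suffices to prove $h^0(C,\Lscr^2(-p))=2=h^0(C,\Lscr)$, for then this map is forced to be an isomorphism and $H^0(C,\Lscr^2(-p))=t\cdot H^0(C,\Lscr)$, which is the claimed equality.

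To compute $h^0(C,\Lscr^2(-p))$ I would first pin down $h^0(C,\Lscr^2)$. By Lemma \ref{propertyCupProduct} we have $H^1(C,\Lscr^2)=0$, so Riemann--Roch applied to the degree $2d$ bundle $\Lscr^2$, together with $g=2(d-1)$, gives
$$
h^0(C,\Lscr^2)=\deg(\Lscr^2)-g+1=2d-(2d-2)+1=3.
$$
Next I would use that $\Lscr^2$ is globally generated, which is immediate from the base point freeness of $\Lscr$ (a tensor product of globally generated bundles is globally generated); hence the evaluation map $H^0(C,\Lscr^2)\to \Lscr^2|_p$ is surjective for our point $p$.

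Finally I would feed this into the structure sequence
$$
0\longrightarrow \Lscr^2(-p)\longrightarrow \Lscr^2\longrightarrow \Lscr^2|_p\longrightarrow 0,
$$
whose long exact cohomology sequence begins $0\to H^0(C,\Lscr^2(-p))\to H^0(C,\Lscr^2)\to \Lscr^2|_p$. Since the last map is the surjective evaluation just discussed, we obtain
$$
h^0(C,\Lscr^2(-p))=h^0(C,\Lscr^2)-1=3-1=2.
$$
Combined with the injection of the first paragraph this gives $h^0(C,\Lscr^2(-p))=2=h^0(C,\Lscr)$ and hence the equality.

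I do not expect a genuine obstacle here: the argument is entirely Riemann--Roch together with the two inputs recorded in Lemma \ref{propertyCupProduct}, namely $H^1(C,\Lscr^2)=0$ and base point freeness. The only points deserving care are that the section $t$ vanishing at $p$ exists and is unique up to scalar for \emph{every} $p$ (including the ramification points of the pencil $|\Lscr|$), which follows from surjectivity of the evaluation $H^0(C,\Lscr)\to\Lscr|_p$ and $h^0(C,\Lscr)=2$, and that global generation, being a global condition, makes the evaluation on $\Lscr^2$ surjective at every point. Consequently the identification holds for all $p\in C$, exactly as stated.
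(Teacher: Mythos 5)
Your proof is correct, and it reads the stated equality the same way the paper does: the unique (up to scalar) section $t$ of $\Lscr$ vanishing at $p$ embeds $H^0(C,\Lscr)$ into $H^0(C,\Lscr^2(-p))$, and one must show this inclusion is an equality; in the paper the same identification appears as $\Lscr\cong\OO C(D)$, with $D$ the zero divisor of $t$, under which both spaces become spaces of rational functions and the inclusion $H^0(C,\OO C(D))\subseteq H^0(C,\OO C(2D-p))$ is the tautological one. The two arguments also share their first main step: Riemann--Roch together with $H^1(C,\Lscr^2)=0$ from Lemma \ref{propertyCupProduct} gives $h^0(C,\Lscr^2)=3$. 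Where you genuinely diverge is the finish. The paper writes down the explicit basis $(1,h,h^2)$ of $H^0(C,\OO C(2D))$, where $h$ is a rational function with polar divisor exactly $D$, and concludes because $h^2\notin H^0(C,\OO C(2D-p))$: the zeros of $h$ are disjoint from $\Supp(D)\ni p$, so $h^2$ has a full-order pole at $p$, forcing $H^0(C,\OO C(2D-p))=\langle 1,h\rangle=H^0(C,\OO C(D))$. You instead compute $h^0(C,\Lscr^2(-p))=2$ abstractly: $\Lscr^2$ is globally generated as the square of a base point free bundle, so evaluation at $p$ is surjective and vanishing at $p$ imposes exactly one linear condition on $H^0(C,\Lscr^2)$; then your injection $\cdot\,t$ between two $2$-dimensional spaces must be an isomorphism. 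Your route is more formal and avoids both the choice of $h$ and the zeros-versus-poles observation, at the cost of invoking global generation of $\Lscr^2$; the paper's route is more concrete, pinpoints the section ($h^2$) that obstructs equality with all of $H^0(C,\Lscr^2)$, and introduces the function $h=g_0/f_0$ that is reused throughout the proof of the main theorem. Both are complete proofs of the lemma for every $p\in C$, including ramification points of the pencil.
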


\begin{proof}
 Let $f_0\in H^0(C,\Lscr)$ be a section vanishing at $p$. 
 The section $f_0$ is unique since $|\Lscr|$ is base point free. 
 Let $D$ be the divisor of zeros of $f_0$. 
 We get an isomorphism $\Lscr \cong \OO C(D)$, where the section $f_0$ corresponds to $1 \in H^0(C,\OO C(D))$.
 
 We compute the vector space $H^0(C, \OO C (2D - p))$. 
 Since $h^0(C,\OO C(D)) = 2$ and $|D|$ is base point free, 
 we find a rational function $h \in H^0(C,\OO C(D))$ 
 whose divisor of poles is exactly $D$.
 The Riemann-Roch theorem states  
 $$
 h^0(C, \OO C (2D)) - h^1(C, \OO C (2D)) = 2\left(\frac{g}{2} + 1\right) + 1 - g = 3,
 $$
 and since $h^1(C, \OO C (2D)) = 0$ by Lemma \ref{propertyCupProduct}, 
 a basis of $H^0(C, \OO C (2D))$ is given by $(1, h, h^2)$. 
 We conclude that
 $$
 H^0(C, \OO C (2D - p)) = H^0(C, \OO C (D))
 $$
 since $h^2\notin H^0(C, \OO C (2D - p))$.
\end{proof}

As already pointed out, the tangent cone coincides with the scroll swept out by $g^1_d=|\Lscr|$. 
In this setting, a further important object is the multiplication or cup-product map.  

\begin{remark}
\label{tangentCone}
 Let $(l_{ij})$ be the linear part of the $(d-1)\times 2$ matrix $(f_{ij})$ as in the introduction 
 (see also Corollary \ref{localEquationsW0d}).
 The matrix $(l_{ij})$ is closely related to the cup-product map $\mu_{\Lscr}$. 
 Indeed, by \cite[Lemma 10.3 and 10.6]{Kem83}, the matrix $(l_{ij})$ 
 describes the induced cup-product action 
 $$
 \cup: H^1(C, \OO C) \rightarrow \Hom(H^0(C, \Lscr), H^1(C, \Lscr)),\ b \mapsto \cup b.
 $$
 
 Thus, the tangent cone corresponds to cohomology classes $b\in H^1(C, \OO C)$ 
 such that the map $\cup b$ has rank $\leq 1$. 
 By Lemma \ref{propertyCupProduct}, the multiplication map is surjective and 
 the homomorphism $\cup b$ always has rank $\geq 1$ for $b\neq 0$. 
 We see that the map $\cup b$ has rank equal to $1$ for points in the tangent cone.
\end{remark}

Now, we will describe the osculating cone as a degeneracy locus of a map of vector bundles on 
$\PP^1\times \PP^{d-2}$. This yields the expected number of points in the intersection of 
the osculating cone and a fiber over $\PP^1$ 
and motivates our main theorem.

Let $\mathbbm{k}[t_0,t_1]$ and $\mathbbm{k}[u_0,\dots, u_{d-2}]$ be the coordinate rings of $\PP^1$ and $\PP^{d-2}$, respectively. 
Using these coordinates and the Segre embedding $\PP^1\times \PP^{d-2}\subset \PP^{g-1}$, 
the linear matrix $(l_{ij})$ can be expressed as the matrix $(t_i\cdot u_j)$ in Cox coordinates 
$\mathbbm{k}[t_0,t_1]\otimes \mathbbm{k}[u_0,\ldots,u_{d-2}]$. 
Let $(q_{ij})$ be the quadratic part of the expansion of $(f_{ij})$ in homogeneous 
forms on $\PP^{g-1}$.
Then, the ideal of the osculating cone is generated by homogeneous elements of bidegree $(3,3)$ in 
the ideal  
\begin{align*}& I_{2\times 2}\left(\begin{pmatrix} l_{ij} + q_{ij} \end{pmatrix}\right)\\
            =&I_{2\times 2}\left(\begin{pmatrix}
                            t_0u_0+q_{00} & \ldots & t_0u_{d-2}+q_{0(d-2)}\\
                            t_1u_0+q_{00} & \ldots & t_1u_{d-2}+q_{1(d-2)}
                            \end{pmatrix}^T\right)\\
            =&I_{2\times 2}\Bigg(\underbrace{\begin{pmatrix}
                            u_0 & \ldots & u_{d-2}\\
                            t_0q_{10}-t_1q_{00} & \ldots &t_0q_{1(d-2)}-t_1q_{0(d-2)}
                            \end{pmatrix}^T}_{=:A_{(t_0,t_1)}}\Bigg)
\end{align*}
where $q_{ij}$ is of bidegree $(2,2)$ in $\mathbbm{k}[t_0,t_1]\otimes \mathbbm{k}[u_0,\ldots,u_{d-2}]$ 
(see Definition \ref{oscCone}). 
In other words, the matrix $A_{(t_0,t_1)}$ induces a map between vector bundles such that the osculating cone 
is the degeneracy locus of $A_{(t_0,t_1)}$. 

We assume that the degeneracy locus has expected codimension $d-2$ in $\PP^1\times \PP^{d-2}$, i.e., 
the osculating cone is a curve.  
We may determine the expected number of points in a general fiber over $\PP^1$ 
with the help of a Chern class computation. 
Indeed, for a fixed point $(\lambda,\mu)\in \PP^1$, the osculating cone is given by the finite set 
$$
\{ p\in \PP^{d-2}|\ \rank\Bigg(A_{(\lambda,\mu)}(p)=\begin{pmatrix}
                            u_0(p) & \ldots & u_{d-2}(p)\\
                            q_0(p) & \ldots & q_{d-2}(p)
                            \end{pmatrix}^T\Bigg) < 2
\}
$$
of points in $\PP^{d-2}$ where $q_i\in \mathbbm{k}[u_0,\dots, u_{d-2}]$ 
is the polynomial $t_0 q_{1i}- t_1 q_{0i}$ evaluated in $(t_0,t_1)=(\lambda,\mu)$ for $i=0,\dots,d-2$. 
We compute its degree: 

We define the vector bundle $\Fscr$ as the cokernel of the first column of $A_{(\lambda,\mu)}$. 
Note that $u_0,\dots, u_{d-2}$ do not have a common zero in $\PP^{d-2}$.
Then, the vanishing locus of the section 
$
s(2): \OO {\PP^{d-2}} \longrightarrow \Fscr(2)
$
induced by the second column of $A_{(\lambda,\mu)}$ is the osculating cone in the fiber 
and its degree is the Chern class $c_{d-2}(\Fscr(2))$.  
The total Chern class of $\Fscr$ is given by 
$$
c(\Fscr)= \frac{c(\OO {\PP^{d-2}}^{d-1})}{c(\OO {\PP^{d-2}}(-1))} = \frac{1}{1-t}.
$$
We can transform the total Chern class to $c(\Fscr)=1+t+\cdots+t^{d-2}\mod t^{d-1}$. 
By the splitting principle, we find constants $\lambda_i$ such that 
$$
c(\Fscr)= \prod\limits_{i=1}^{d-2} (1 + \lambda_i t)=1+t+\cdots+t^{d-2}.
$$ 
The Chern class $c_{d-2}(\Fscr(2))$ is given by the coefficient of the $t^{d-2}$-term 
of $c(\Fscr(2))=\prod_{i=1}^{d-2} (1 + (2+ \lambda_i) t)$, i.e., $2^{d-1}-1$. 
If the osculating cone intersects $\overline{D}$ in a zero dimensional scheme, 
then the degree of $\overline{D}\cap \OC_3(W^0_d(C),\Lscr)$ is $2^{d-1}-1$, 
which is consistent with the assertion of the main theorem. 

\medskip 

Next, we compute degree and genus of the osculating cone.
As mentioned above, the osculating cone is the degeneracy locus of the matrix $A_{(t_0,t_1)}$ 
and we assume that the degeneracy locus has expected codimension $d-2$. 
Let $\widetilde{\Fscr}$ be the pullback of $\Fscr$ by the second projection $\PP^1\times \PP^{d-2}\to \PP^{d-2}$. 
Then, the vanishing locus of 
$
s(3,2): \OO {\PP^1\times \PP^{d-2}} \longrightarrow \widetilde{\Fscr}(3,2)
$ 
induced by the second column of $A_{(t_0,t_1)}$ is the osculating cone. 
We compute as above the Chern class $c_{d-2}(\widetilde{\Fscr}(3,2))$ and 
get the degree of the osculating cone as the total degree on $\PP^1\times \PP^{d-2}$,
\begin{align*}
\deg(\OC_3(W^0_d(C),\Lscr)) = & 2^{d-1}-1 + 3(\sum\limits_{i=1}^{d-2} i\cdot 2^{i-1})  \\ 
 = & (3(d-1)-4)2^{d-2} + 2.
\end{align*}
If the osculating cone has expected codimension $d-2$, the Eagon-Northcott complex resolves the osculating cone 
$\OC_3(W^0_d(C),\Lscr)\subset \PP^1\times \PP^{d-2}$. 
We can express the Hilbert polynomial $H_{\OC_3(W^0_d(C),\Lscr)}(x,y)$ of the osculating cone in terms of 
Betti numbers and twists appearing in the resolution. 
Using computer algebra software, we can compute the genus of the osculating cone. 
The genus is given as
\begin{align*}
g(\OC_3(W^0_d(C),\Lscr))= & 1 - H_{\OC_3(W^0_d(C),\Lscr)}(0,0) \\
= &1-(1+\sum_{i=1}^{d-2}(-1)^i \sum_{j=1}^i \binom{d-1}{i+1}(1-3j)\binom{d-3-i-j}{d-2})\\
= & (3(d-1)(d-2)-4)2^{d-3}+2. 
\end{align*}

\begin{example}
 \label{exampleOC}
 Let $C$ be a canonically embedded general curve of genus $6$ and 
 let $\Lscr\in W^1_4(C)$ be a smooth point.   
 We denote by $S=\mathbbm{k}[t_0,t_1]\otimes \mathbbm{k}[u_0,u_1,u_2]$ the coordinate ring of the Segre product $\PP^1\times \PP^2$. 
 As explained above, we express the ideal $I(\OC_3(W^0_4(C),\Lscr))$ of the osculating cone in $\PP^1\times \PP^2$
 as $2\times 2$ minors of the matrix 
 $$A_{(t_0,t_1)}=
 \begin{pmatrix}
  u_0 & u_1 & u_2 \\
  p_0 & p_1 & p_2
 \end{pmatrix}^T,
 $$
 where $p_i\in S_{(3,2)}$ are polynomials of bidegree $(3,2)$ for $i=0,1,2$. 
 
 For a general curve, the map $\varphi_{|\Lscr|}: C \to \PP^1$ has only simple ramification points and 
 by Theorem \ref{mainTheorem}, 
 the osculating cone has expected codimension $2$ in $\PP^1\times \PP^2$.
 By Hilbert-Burch Theorem, the minimal free resolution of $I(\OC_3(W^0_4(C),\Lscr))$ is 
 $$
 0 \longrightarrow S(-3,-4)\oplus S(-6,-5) \xrightarrow{A_{(t_0,t_1)}} S^3(-3,-3) \longrightarrow 
 I(\OC_3(W^0_4(C),\Lscr)) \longrightarrow 0.
 $$
 Thus, the osculating cone is a reducible curve of degree $22$ and genus $30$. 
 For more detailed analysis of the osculating cone see the end of Section \ref{proofOfTheMainTheorem}. 
\end{example}


\section{Kempf's cohomology obstruction theory}
\label{obstructionTheory}

First, we recall variation of cohomology to provide local equations of $W^0_d(C)$ 
at the point $\Lscr$. 
Then, we introduce flat coordinates on an arbitrary algebraic group as in \cite{Kem86} 
in order to give a precise definition of the osculating cone. 
Finally, we study infinitesimal deformations of the line bundle $\Lscr$ 
which are related to flat curves. 
This results in an explicit criterion for a point $b\in H^1(C,\OO C)$ to lie 
in the osculating cone.

\subsection{Variation of cohomology} 
\label{variationCohomology}

Let $S = \Spec(A)$ be an affine neighbourhood of $\Lscr \in \Pic^d(C)$, and 
let $\Lcal$ be the restriction of the Poincar\'e line bundle over $C\times \Pic^d(C)$ to $C\times S$. 
The idea of the variation of cohomology is to find an approximating homomorphism which computes simultaneously 
the upper-semicontinuous functions on $S$
$$
s \mapsto h^i(C\times \{s\}, \Lcal\otimes_A k(s)),\ i= 0,1,
$$
where $k(s)= A/\mfrak_s$ and $\mfrak_s$ is the maximal ideal of $s\in S$. 

\begin{theorem}[\cite{EGA} Theorem 6.10.5 or \cite{Kem83} Theorem 7.3] 
\label{approxHom}
 Let $\mathcal{M}$ be a family of invertible sheaves on $C$ parametrized by an affine scheme $S=\Spec(A)$. 
 There exist two flat $A$-modules $F$ and $G$ of finite type and an $A$-homomorphism $\alpha: F \rightarrow G$ 
 such that for all $A$-modules $M$, there are isomorphisms
 $$
 H^0(C\times S, \mathcal{M} \otimes_A M) \cong \ker(\alpha \otimes_A id_M),\ \ \ 
 H^1(C\times S, \mathcal{M} \otimes_A M) \cong \coker(\alpha \otimes_A id_M).
 $$
\end{theorem}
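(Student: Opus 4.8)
The plan is to compute the cohomology of $\mathcal{M}$ first by a complex of flat $A$-modules and then to shrink that complex to two finitely generated terms, keeping every identification functorial in $M$. Since $C$ is a curve, I would cover it by two affine opens $U_0 = C\setminus\{p\}$ and $U_1 = C\setminus\{q\}$ for distinct points $p,q$; both $U_i$ and $U_0\cap U_1$ are affine, so $U_i\times S$ and $(U_0\cap U_1)\times S$ are affine, and as $C\times S$ is separated the \v{C}ech complex for the cover $\{U_i\times S\}$ computes sheaf cohomology. Set $L^0 = \Gamma(U_0\times S,\mathcal{M})\oplus\Gamma(U_1\times S,\mathcal{M})$ and $L^1 = \Gamma((U_0\cap U_1)\times S,\mathcal{M})$ with the \v{C}ech differential $L^0\to L^1$. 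Because $\mathcal{M}$ is invertible, hence flat over $S$, and each open is affine, the $L^i$ are flat over $A$ and formation of sections commutes with base change, $\Gamma(U_I\times S,\mathcal{M}\otimes_A M)\cong\Gamma(U_I\times S,\mathcal{M})\otimes_A M$. Thus $H^i(L^\bullet\otimes_A M)\cong H^i(C\times S,\mathcal{M}\otimes_A M)$ for every $A$-module $M$, functorially. This is already a two-term complex, but the $L^i$ are far from being of finite type, so the remaining task is to replace $L^\bullet$ by a finitely generated complex.

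The hard part is exactly this replacement. Because $C\times S\to S$ is projective and $\mathcal{M}$ is coherent and $S$-flat, the cohomology is finitely generated, and the standard replacement lemma of Grothendieck and Mumford produces a bounded complex $K^\bullet$ of finite free $A$-modules together with a morphism $K^\bullet\to L^\bullet$ that remains a quasi-isomorphism after applying $-\otimes_A M$ for every $M$. The construction is a descending induction on degree: one represents the top cohomology and the cocycles by finitely many generators, lifts these to finite free modules, and verifies that the resulting map is a \emph{universal} quasi-isomorphism. Properness is precisely what guarantees that finitely many generators suffice simultaneously for all $M$; the naive complex of the first step fails only because its terms are not of finite type.

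Finally I would collapse $K^\bullet$ to two terms using $\dim C = 1$. Since $H^i(C\times S,\mathcal{M}\otimes_A M)=0$ for $i\geq 2$ and all $M$, we get $H^i(K^\bullet\otimes_A M)=0$ for $i\geq 2$; taking $M=A$ shows the top differential $d^{n-1}\colon K^{n-1}\to K^n$ is surjective, and as $K^n$ is projective it splits. Hence $K^\bullet$ decomposes as a direct sum of the truncated complex $K^0\to\cdots\to K^{n-2}\to\ker(d^{n-1})$ and the acyclic complex $K^n\xrightarrow{\ \sim\ }K^n$ in degrees $n-1,n$. The module $\ker(d^{n-1})$ is a direct summand of $K^{n-1}$, hence finitely generated and flat, and the decomposition is preserved by $-\otimes_A M$, so no cohomology changes. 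Iterating collapses $K^\bullet$ to a two-term complex $F\xrightarrow{\alpha}G$ in degrees $0$ and $1$ with $F,G$ finitely generated and flat, for which $H^0(K^\bullet\otimes_A M)=\ker(\alpha\otimes_A\mathrm{id}_M)$ and $H^1(K^\bullet\otimes_A M)=\coker(\alpha\otimes_A\mathrm{id}_M)$. Composing with the functorial isomorphisms of the first step, transported along the universal quasi-isomorphism of the second, yields the asserted identifications $H^0(C\times S,\mathcal{M}\otimes_A M)\cong\ker(\alpha\otimes_A\mathrm{id}_M)$ and $H^1(C\times S,\mathcal{M}\otimes_A M)\cong\coker(\alpha\otimes_A\mathrm{id}_M)$, functorially in $M$.
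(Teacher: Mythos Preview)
The paper does not prove this theorem at all: it is quoted with an external citation (EGA III, Th\'eor\`eme 6.10.5, or Kempf's Theorem 7.3) and used as a black box. So there is nothing in the paper to compare your argument against.

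That said, your outline is the standard proof one finds in Mumford's \emph{Abelian Varieties} (II, \S5) or in the semicontinuity chapter of Hartshorne, and it is essentially correct. Two small remarks. First, the ``replacement lemma'' you invoke is doing the real work, and you should be explicit that the induction runs from the top down: one first notes that $L^1\to\coker(d^0)$ is surjective onto a finitely generated $A$-module, picks finitely many lifts to get a finite free $K^1$ mapping to $L^1$, and then builds $K^0$ similarly; the point is that the resulting map $K^\bullet\to L^\bullet$ is a quasi-isomorphism after \emph{any} tensor product because both complexes have flat terms (so the mapping cone is acyclic and has flat terms, hence stays acyclic). Second, in your collapsing step you need the new top term to be projective in order to split again at the next stage; you observe that $\ker(d^{n-1})$ is a direct summand of the free module $K^{n-1}$, hence projective, so the induction indeed goes through. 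With those two points made precise your argument is complete.
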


If we shrink $S$ to a smaller neighbourhood, we may assume that 
$F$ and $G$ are free $A$-modules of finite type by Nakayama's Lemma. 
Furthermore, we may assume that the approximating homomorphism is minimal, i.e., 
$\alpha \otimes_A k(\Lscr)$ is the zero homomorphism by \cite[Lemma 10.2]{Kem83}.
Applying Theorem \ref{approxHom} to $\mathcal{M}=\Lscr$ leads to the following corollary. 

\begin{corollary}
\label{localEquationsW0d}
 The local equations of $W^0_d(C)|_S$ at the point $\Lscr$ are given by the maximal minors of a 
 $(d-1)\times 2$ matrix $(f_{ij})$ of regular functions on $S$ which vanish at $\Lscr$. 
\end{corollary}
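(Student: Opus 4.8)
The plan is to apply the variation-of-cohomology theorem (Theorem \ref{approxHom}) to the family $\Lcal$ and to read the local equations off the resulting approximating homomorphism $\alpha \colon F \to G$. After shrinking $S$ as in the remarks following Theorem \ref{approxHom}, I may assume that $F$ and $G$ are free $A$-modules of finite rank and that $\alpha$ is minimal, so that $\alpha \otimes_A k(\Lscr) = 0$.

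First I would pin down the ranks of $F$ and $G$. Taking $M = k(\Lscr)$ in Theorem \ref{approxHom} and using minimality, the map $\alpha \otimes_A k(\Lscr)$ vanishes, whence $\ker(\alpha \otimes_A k(\Lscr)) = F \otimes_A k(\Lscr)$ and $\coker(\alpha \otimes_A k(\Lscr)) = G \otimes_A k(\Lscr)$. Therefore $\rank F = h^0(C,\Lscr)$ and $\rank G = h^1(C,\Lscr)$. By Lemma \ref{propertyCupProduct}, $|\Lscr|$ is a base point free pencil, so $h^0(C,\Lscr) = 2$, and Riemann--Roch with $g = 2(d-1)$ gives $h^1(C,\Lscr) = h^0(C,\Lscr) - (d-g+1) = 2 - (3-d) = d-1$. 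Thus $\alpha$ is given by a $(d-1)\times 2$ matrix $(f_{ij})$ of regular functions on $S$, and minimality forces every entry $f_{ij}$ to vanish at $\Lscr$.

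Next I would identify the locus $W^0_d(C)|_S$. For $s \in S$, Theorem \ref{approxHom} with $M = k(s)$ identifies $\ker(\alpha \otimes_A k(s))$ with the space of global sections of $\Lcal$ on the fiber over $s$; since $\rank F = 2$, its dimension equals $2 - \rank(\alpha \otimes_A k(s))$. Hence $s$ lies in $W^0_d(C)$, i.e.\ this dimension is at least $1$, exactly when $\rank(\alpha \otimes_A k(s)) \leq 1$, which is the determinantal condition that all $2\times 2$ minors of $(f_{ij})$ vanish at $s$. As $(f_{ij})$ has only two columns, these $2\times 2$ minors are precisely its maximal minors.

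The remaining and most delicate point is scheme-theoretic rather than set-theoretic: one must verify that the natural scheme structure on $W^0_d(C)$ coincides with the ideal generated by the maximal minors of $(f_{ij})$. Here the full strength of Theorem \ref{approxHom} is essential, since it computes kernel and cokernel after tensoring with an \emph{arbitrary} $A$-module $M$, not merely with residue fields; this is exactly what identifies the determinantal (Fitting-ideal) structure defined by $\alpha$ with the standard structure of $W^0_d(C)$, as in \cite{Kem83}. I expect this compatibility of scheme structures to be the main obstacle, whereas the rank count becomes routine once the cohomology dimensions $h^0(C,\Lscr)=2$ and $h^1(C,\Lscr)=d-1$ have been fixed.
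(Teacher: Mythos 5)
Your proposal is correct and takes essentially the same route as the paper: apply Theorem \ref{approxHom} to $\Lcal$, use minimality to get $\alpha \otimes_A k(\Lscr) = 0$, hence $\rank F = h^0(C,\Lscr) = 2$ and $\rank G = h^1(C,\Lscr) = d-1$ (the paper phrases the latter via constancy of $\chi(\Lcal\otimes k(s))$, you via Riemann--Roch --- the same computation), with minimality also forcing the entries $f_{ij}$ to vanish at $\Lscr$. The scheme-theoretic identification of $W^0_d(C)$ with the determinantal locus of $\alpha$, which you rightly flag as the delicate point, is likewise not reproved in the paper but imported from \cite{Kem83}, exactly as you suggest.
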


\begin{proof}
 Since $h^0(C,\Lscr)=2$ and $\chi(\Lcal \otimes k(s)) = \chi(\Lscr) = 2 - (d-1)$, $\forall s \in S$, 
 the $A$-modules $F$ and $G$ are free of rank $2$ and $d-1$, respectively.
\end{proof}


\subsection{Flat coordinates and osculating cones}
\label{flatCoordOscCone}

We recall the definition of the flat structure on an algebraic group and 
of the osculating cone according to \cite[Section 1 $\&$ 2]{Kem86}. 
 
In the analytic setting, a Lie group $G$ and its tangent space $\mathfrak{g}$ are related by the exponential mapping, 
which is an analytic diffeomorphism between an open neighbourhood of the identity of $G$ 
and an open neighbourhood of the origin of $\mathfrak{g}$ (see for instance \cite[Chapter II, \S 1]{H01}). 
Roughly speaking, the flat structure on an algebraic group is a truncated exponential mapping. 
We explain this in more details.
 
For an algebraic group $X$ of finite type over an algebraically closed field, let
$X_{n, x}$ be the $n$-th \emph{infinitesimal neighbourhood} of $x\in X$ which 
is given by the $(n+1)$-st power of the ideal of $x$.   
It is sufficient to define the flat structure on $X$ at the identity $e$ since we can translate this  
to any point of $X$. We denote $T$ the tangent space to $X$ at $e$. 
Recall that the identity is the fixed point of the $m$-power operation which sends $g$ to $g^m$ 
for all $m\in \mathbb{Z}$.

\begin{definition}
  The \emph{flat structure } of $n$-th order on $X$ at $e$ is given by an equivariant 
  isomorphism $\iota: T_{n, 0} \rightarrow X_{n, e}$ so that 
  the multiplication by $m$ on $T_{n, 0}$ coincides with 
  the action of the $m$-power operation on $X_{n, e}$ for all integers $m$. 
  We call a smooth subvariety $Y\subset X$ passing through $e$ \emph{flat to the $n$-th order} if 
  $\iota^{-1}(Y_{n,e})$ has the form $S_{n, 0}$ where $S$ is a linear subspace of $T$. 
\end{definition} 

\begin{remark}
\label{flatCurve}
 \begin{enumerate}
  \item [(a)] If $\text{char} (\mathbbm{k})=0$, there is a unique flat structure of order $n$ for all $n\in \mathbb{N}$. 
        In the limit, these flat structures give an analytic isomorphism between a neighbourhood of the $0$ in $T$ 
        and a neighbourhood of $e$ in $X$, the exponential mapping. 
  \item [(b)] If $\text{char} (\mathbbm{k})=p$, there exists a flat structure on $X$ if $n$ is strictly less than $p$.  
  \item [(c)] Let $E\subset X$ be a smooth curve passing through $x$. 
        Then $E$ is flat for the flat structure of order $n$ if and only if the $m$-power operation 
        maps $E_{n,x}$ to $E_{n,x}$ for all integers $m$. 
 \end{enumerate}
\end{remark}
In Subsection \ref{infDef}, we will give an explicit description of the flat structure on the Picard variety $\Pic(C)$.

By abuse of notation, we call $\iota$ the equivariant isomorphism 
defining the flat structure at an arbitrary point $x\in X$. 
We use the isomorphism $\iota$ to expand regular functions on $X$ at $x$. More precisely,  
let $f_1,\dots, f_m$ be regular functions on $X$ at $x$ such that their pullback under $\iota$
forms a basis $\{x_1,\dots, x_m\}$ for the linear functions on $T_{n, 0}$. 

\begin{definition}
Let $g$ be a regular function on $X$ at $x$.
An \emph{expansion of $g$ in flat coordinates} is the expansion $\iota^*(g) = g_k + g_{k+1} + \cdots$  
where $g_j$ is a homogeneous polynomial in the variables $x_i$ on $T$. A \emph{component} of $f$ 
is such a homogeneous polynomial in the expansion. 
\end{definition}

Our main object of interest is the following.

\begin{definition} \label{oscCone}
 Let $X$ be an algebraic group with a flat structure and 
 let $Y\subset X$ be a smooth subvariety passing through $x\in X$. 
 The \emph{osculating cone of order $r$} to $Y$ at a point $x$, denoted by $\OC_r(Y, x) \subset T_{x}(X)$,
 is the closed scheme defined by the ideal generated by the forms 
 $$
 \{f_{k} |\ f_{k} \text{ is a component for an element } f\in I(Y), \forall k\leq r \}. 
 $$
\end{definition}

To get back to the Brill-Noether locus  $W^0_d(C)$, 
we end this section with an example.  

\begin{example}
 For the Brill-Noether locus $W^0_d(C)\subset \Pic^d(C)$, the tangent cone at $\Lscr\in W^1_d(C)$ coincides with 
 the osculating cone $\OC_2(W^0_d(C),\Lscr)$ of order $2$. 
 Indeed, by Corollary \ref{localEquationsW0d}, the Brill-Noether locus $W^0_d(C)$ is given locally 
 by the maximal minors of a $(d-1)\times 2$ matrix of regular functions vanishing at $L$. 
 Thus, the ideal of the tangent cone is generated by all quadratic components in the flat expansion of 
 regular functions in the ideal of $W^0_d(C)$.
\end{example}


\subsection{Infinitesimal deformations of global sections of $\Lscr$ } 
\label{infDef}

We follow \cite[Section $3$]{Kem83} in order to represent points of the canonical space $H^0(C,\omega_C)^*$ 
by principal parts of rational functions in $\OO C$. 
Then, we introduce flat coordinates on $\Pic(C)$ and show the connection of an infinitesimal deformation 
of $\Lscr$ and flat curves in $\Pic(C)$ as in \cite[Section 2]{Kem86}. 
Everything leads to a criterion for points in the canonical space to lie in the osculating cone. 
 \\
Let $\Mscr$ be an arbitrary line bundle on $C$ and let $\Rat(\Mscr)$ be the space of all 
rational sections of $\Mscr$. For a point $p\in C$, the space of principal parts of $\Mscr$ at $p$ 
is the quotient 
$$
\Prin_p(\Mscr)= \Rat(\Mscr) / \Rat_p(\Mscr),
$$
where $\Rat_p(\Mscr)$ is the space of rational sections of $\Mscr$ which are regular at $p$. 
Since a rational section of $\Mscr$ has only finitely many poles, we get a mapping 
\begin{align*}
\mathfrak{p}: \Rat(\Mscr) & \longrightarrow \Prin(\Mscr) := \bigoplus_{p\in C} \Prin_p(\Mscr) \\ 
               s & \longmapsto (s\ \text{modulo } \Rat_p(\Mscr))_{p\in C}
\end{align*}
and the following lemma holds.

\begin{lemma}{\cite[Lemma 3.3]{Kem83}}
\label{H0H1}
 The kernel and cokernel of $\mathfrak{p}$ are isomorphic to $H^0(C, \Mscr)$ and $H^1(C, \Mscr)$, respectively. 
\end{lemma}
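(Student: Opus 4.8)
The plan is to realize $\mathfrak{p}$ as the map on global sections arising from a short exact sequence of sheaves on $C$, and then to read off the kernel and cokernel from the associated long exact cohomology sequence.

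The kernel is essentially immediate. An element of $\ker(\mathfrak{p})$ is a rational section $s\in \Rat(\Mscr)$ whose image in $\Prin_p(\Mscr)$ vanishes for every $p\in C$; that is, $s$ is regular at every point of $C$. Since $C$ is a smooth complete curve, such a global rational section is a genuine regular section, so $\ker(\mathfrak{p})=H^0(C,\Mscr)$.

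For the cokernel I would sheafify the construction. Let $\mathcal{K}$ denote the sheaf of rational sections of $\Mscr$, i.e. the constant sheaf on $C$ whose sections over every nonempty open set are $\Rat(\Mscr)$, and let $\mathcal{P}$ be the sheaf of principal parts, the sheaf associated to $U\mapsto \bigoplus_{p\in U}\Prin_p(\Mscr)$. The inclusion of regular sections into rational sections and the quotient map fit into a sequence
$$0 \longrightarrow \Mscr \longrightarrow \mathcal{K} \longrightarrow \mathcal{P} \longrightarrow 0.$$
I would check exactness on stalks: at a point $p$ the stalk $\Mscr_p$ is exactly the subspace $\Rat_p(\Mscr)\subset \Rat(\Mscr)$ of rational sections regular at $p$ (two such germs agree if and only if they coincide on an open set, hence everywhere, since $C$ is irreducible), and by definition $\Prin_p(\Mscr)=\Rat(\Mscr)/\Rat_p(\Mscr)$, so the stalk sequence is exact.

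It then remains to take global sections and pass to the long exact sequence. The global sections of $\mathcal{K}$ are $\Rat(\Mscr)$ because $C$ is irreducible, so every nonempty open is connected, and the global sections of $\mathcal{P}$ are $\bigoplus_{p\in C}\Prin_p(\Mscr)=\Prin(\Mscr)$, the finite-support condition being automatic since a rational section has only finitely many poles. Under these identifications the induced map $H^0(C,\mathcal{K})\to H^0(C,\mathcal{P})$ is exactly $\mathfrak{p}$. The crucial input is that $\mathcal{K}$, being a constant sheaf on the irreducible space $C$, is flasque, so $H^1(C,\mathcal{K})=0$. The long exact sequence therefore collapses to
$$0 \to H^0(C,\Mscr) \to \Rat(\Mscr) \xrightarrow{\ \mathfrak{p}\ } \Prin(\Mscr) \to H^1(C,\Mscr) \to 0,$$
which gives both assertions at once. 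The only point requiring genuine care is the vanishing $H^1(C,\mathcal{K})=0$; everything else reduces to a stalkwise verification and the bookkeeping of global sections.
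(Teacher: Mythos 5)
Your proof is correct, and it is essentially the argument of Kempf himself: the paper offers no proof of this statement, simply citing \cite[Lemma 3.3]{Kem83}, and the standard proof there is exactly your flasque resolution $0\to\Mscr\to\mathcal{K}\to\mathcal{P}\to 0$ by the constant sheaf of rational sections and the sheaf of principal parts, with $H^1(C,\mathcal{K})=0$ because a constant sheaf on an irreducible space is flasque. One small point worth tightening: the finiteness of support for global sections of $\mathcal{P}$, which gives the identification $H^0(C,\mathcal{P})=\Prin(\Mscr)$, follows from quasi-compactness (noetherianness) of $C$ rather than from the finite-pole property of a single rational section --- the latter is what makes the map $\mathfrak{p}$ itself land in the direct sum.
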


In particular, an element $b\in H^1(C, \OO C)$ is represented by a collection $\beta = (\beta_p)_{p\in C}$ of 
rational functions, 
where $\beta_p$ is regular at $p$ except for finitely many $p$. 
 
We turn to infinitesimal deformations of our line bundle $\Lscr \in W^1_d(C)$ which are determined 
by elements in $H^1(C, \OO C)$. Furthermore, we will give an explicit description of the flat structure on $\Pic(C)$.
 
Let $X_i$ be the infinitesimal scheme $\Spec(A_i)$ supported on one point $x_0$, where $A_i$ is the Artinian ring 
$\mathbbm{k}[\varepsilon]/\varepsilon^{i+1}$ for $i\geq 1$.  
We consider the sheaf homomorphism $\OO C \longrightarrow \OO {C\times X_i} ^*$ given by 
the truncated exponential mapping 
$$
s \longmapsto 1 + s\varepsilon + \frac{s^2 \varepsilon^2}{2} + \dots + \frac{s^i\varepsilon^i}{i !}
              =:\exp_i(s\varepsilon), 
$$
which is the identity on $C\times \{x_0\}$. This homomorphism induces a map between cohomology groups 
$$
H^1(C, \OO C) \longrightarrow H^1(C\times X_i, \OO {C\times X_i}^*), 
$$ 
where the image of a cohomology class $b\in H^1(C, \OO C)$ determines a line bundle on $C\times X_i$, 
denoted by $\Lscr_i(b)$, and whose restriction to $C\times \{x_0\}$ is the structure sheaf $\OO C$. 
For $i=1$, this is the usual identification between $H^1(C, \OO C)$ and the tangent space to $\Pic(C)$ at $\OO C$.
Furthermore, there exists a unique morphism 
$$
\exp_i(b): X_i \rightarrow \Pic(C)
$$ 
such that $\exp_i(b)(x_0) = \OO C$ and 
$\Lscr_i(b)$ is the pullback of the Poincar\'e line bundle under the morphism $id_C \times \exp_i(b)$ 
by the universal property of the Poincar\'e line bundle. 
The image of $\exp_i(b)$ in $\Pic(C)$ is a flat curve. 
Indeed, for a general $i$, we have $\exp_i(b)^n = \exp_i(n b)$ for all integers $n$. 
By Remark \ref{flatCurve} (c), the image of $\exp_i(b)$ is a flat curve through $\OO C$ for $b\neq 0$ 
in the canonical flat structure on the algebraic group $\Pic(C)$.
 
After translation of the flat structure to the point $\Lscr$, we get flat curves 
$$
\exp_i(b): X_i \longrightarrow \Pic(C)
$$
with $\exp_i(b)(x_0) = \Lscr$ for $b \in H^1(C, \Lscr)$.  
We will describe the infinitesimal deformation of $\Lscr$ corresponding to such a flat curve in more detail. 
If $b\in H^1(C,\OO C)$ is represented by a collection $\beta = (\beta_p)_{p\in C}$ of rational functions, 
then the line bundle $\Lscr_i(b)$ is the $i$-th deformation of $\Lscr$ whose stalk at $p$ 
is given by rational sections $f= f_0 + f_1 \varepsilon + \cdots + f_i \varepsilon^i \in \Rat(\Lscr) \otimes A_i$ 
such that $f \exp(\beta_p \varepsilon)$ is regular at $p$. 
A different choice of $\beta$ gives a different but isomorphic subsheaf of $\Rat(\Lscr)\otimes A_i$ 
(see \cite[end of \S 2]{Kem86} for the details).

\begin{example}
\label{globalSectionOfL_2(b)}
If $i=2$, then $f=f_0 + f_1 \varepsilon + f_2 \varepsilon^2 \in \Rat(\Lscr) \otimes A_2$ 
is a global section of $\Lscr_2(b)$ if the following three conditions are satisfied:
 \begin{enumerate}
  \item [(a)] $f_0$ is a global section of $\Lscr$,
  \item [(b)] $f_1+f_0\beta_p$ is regular at $p$ for all $p\in C$ and 
  \item [(c)] $f_2+f_1\beta_p + f_0\beta_p^2/2$ is regular at $p$ for all $p\in C$. 
 \end{enumerate}
Note that the conditions are independent of the representative $\beta_p$. 
\end{example}

Furthermore, we have exact sequences 
\begin{align}\label{shortExactSequence}
  0 \longrightarrow \varepsilon \Lscr \longrightarrow & \Lscr_1(b) \longrightarrow \Lscr \longrightarrow 0, \\
 \nonumber
  0 \longrightarrow \varepsilon^2 \Lscr \longrightarrow & \Lscr_2(b) \longrightarrow \Lscr_1(b) \longrightarrow 0 
\emph{, etc.}.
\end{align}
Using the first exact sequence, 
we get a criterion for points in the canonical space lying in the tangent cone. 
We denote by $\mathbbm{k}\cdot b$ the line spanned by a cohomology class $b$. 

\begin{lemma} \label{criterionTangentCone}
 Let $0\neq b \in H^1(C, \OO C)$ be a nonzero cohomology class. 
 The point $\mathbbm{k}\cdot b\in\PP^{g-1}$ lies in the tangent cone if and only if 
 there exists a global section 
 $$
 f_0 + f_1 \varepsilon \in H^0(C\times X_1, \Lscr_1(b)) 
 $$
 where $f_0\neq 0$ is unique up to scalar.
\end{lemma}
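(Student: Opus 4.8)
The plan is to translate the lemma into the rank description of the tangent cone from Remark \ref{tangentCone} by inspecting the long exact cohomology sequence attached to the first infinitesimal deformation $\Lscr_1(b)$. Recall from that remark that, because $\mu_{\Lscr}$ is surjective (Lemma \ref{propertyCupProduct}), for every $b\neq 0$ the cup-product homomorphism $\cup b\colon H^0(C,\Lscr)\to H^1(C,\Lscr)$ has rank at least one, and $\mathbbm{k}\cdot b$ lies in the tangent cone exactly when this rank equals one. Since $h^0(C,\Lscr)=2$, the rank of $\cup b$ is either $1$ or $2$, so the tangent-cone condition is equivalent to $\dim\ker(\cup b)=1$. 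Thus it suffices to show that a nonzero $f_0\in H^0(C,\Lscr)$ lifts to a global section $f_0+f_1\varepsilon$ of $\Lscr_1(b)$ if and only if $f_0\in\ker(\cup b)$.

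The first step is to feed the short exact sequence $0\to\varepsilon\Lscr\to\Lscr_1(b)\to\Lscr\to 0$ from \eqref{shortExactSequence} into the long exact cohomology sequence on $C\times X_1$. Multiplication by $\varepsilon$ gives $H^i(C\times X_1,\varepsilon\Lscr)\cong H^i(C,\Lscr)$, and the quotient $\Lscr$ is supported on the central fiber, so the beginning of the sequence reads
$$
0\to H^0(C,\Lscr)\to H^0(C\times X_1,\Lscr_1(b))\to H^0(C,\Lscr)\xrightarrow{\delta} H^1(C,\Lscr).
$$
A global section $f_0+f_1\varepsilon$ of $\Lscr_1(b)$ maps to its reduction $f_0$ under the middle arrow, so a nonzero $f_0$ admits such a lift precisely when $f_0\in\ker\delta$. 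The crucial point is therefore to identify the connecting homomorphism $\delta$ with $\cup b$. I would do this in \v{C}ech cohomology: by the construction in Subsection \ref{infDef} via the truncated exponential $s\mapsto 1+s\varepsilon$, if $\{g_{ij}\}$ are transition functions for $\Lscr$ and $\{b_{ij}\}$ is a cocycle representing $b$, then $\Lscr_1(b)$ has transition functions $g_{ij}(1+b_{ij}\varepsilon)$. Writing the gluing condition for $\{f_0^{(i)}+f_1^{(i)}\varepsilon\}$ and separating the $\varepsilon^0$ and $\varepsilon^1$ parts shows that $f_0$ is a section of $\Lscr$ while the obstruction to solving for $\{f_1^{(i)}\}$ is exactly the cocycle $\{g_{ij}b_{ij}f_0^{(j)}\}$ representing $b\cup f_0$. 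Hence $\delta=\cup b$ and $\ker\delta=\ker(\cup b)$.

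Combining the two steps closes the argument. A nonzero $f_0$ that lifts to $H^0(C\times X_1,\Lscr_1(b))$ is exactly a nonzero element of $\ker(\cup b)$; for $b\neq 0$ the surjectivity of $\mu_{\Lscr}$ forces $\ker(\cup b)$ to be at most one-dimensional, so such an $f_0$ is automatically unique up to scalar, and its existence is equivalent to $\dim\ker(\cup b)=1$, that is, to $\mathbbm{k}\cdot b$ lying in the tangent cone by Remark \ref{tangentCone}. The main technical obstacle is the \v{C}ech identification $\delta=\cup b$; this is standard but must be done carefully so that the extension class of \eqref{shortExactSequence} is seen to be $b$ itself under $\Ext^1_{\OO C}(\Lscr,\Lscr)\cong H^1(C,\OO C)$. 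Once that identification is in hand, the passage from a one-dimensional kernel to rank exactly one, and hence to membership in the tangent cone, is immediate from $h^0(C,\Lscr)=2$.
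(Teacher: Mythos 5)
Your proposal is correct and follows essentially the same route as the paper: both feed the short exact sequence $0\to\varepsilon\Lscr\to\Lscr_1(b)\to\Lscr\to 0$ into the long exact cohomology sequence, identify the coboundary map with $\cup b$, and conclude via the rank-one characterization of the tangent cone from Remark \ref{tangentCone}. The only difference is one of bookkeeping: the paper justifies the identification of the coboundary with the cup product by citing \cite[Lemma 10.6]{Kem83}, whereas you verify it by a direct \v{C}ech computation (and you spell out the converse direction and the uniqueness of $f_0$ a bit more explicitly), which is a valid substitute.
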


\begin{proof}
 Let $0\neq b\in H^1(C, \OO C)$ be a cohomology class. 
 Applying the global section functor to the short exact sequence (\ref{shortExactSequence}), 
 we get the exact sequence 
 $$
 0 \longrightarrow H^0(C,\Lscr) \longrightarrow H^0(C\times X_1,\Lscr_1(b)) \longrightarrow H^0(C,\Lscr) 
 \stackrel{\cup b}{\longrightarrow} H^1(C,\Lscr) 
 $$
 where the coboundary map is given by the cup-product with $b$ by \cite[Lemma 10.6]{Kem83}. 
 By Remark \ref{tangentCone}, $\mathbbm{k}\cdot b$ is in the tangent cone if and only if the map $\cup b$ has rank $1$. 
 Thus, for points in the tangent cone,  $H^0(C\times X_1,\Lscr_1(b))$ is three-dimensional and there exists 
 a global section as desired. 
\end{proof}

The following criterion provides the connection of the osculating cone of order $3$ and second order deformations 
of $\Lscr$. The proof follows \cite[Lemma 4]{KS88}.

\begin{lemma} \label{criterionOscCone}
 Let $0\neq b \in H^1(C, \OO C)$ be a nonzero cohomology class. 
 The point $\mathbbm{k}\cdot b\in\PP^{g-1}$ lies in the osculating cone of order $3$ 
 if and only if there exists a global section
 $$
 f_0 + f_1 \varepsilon + f_2 \varepsilon^2 \in H^0(C\times X_2, \Lscr_2(b)) 
 $$
 where $f_0\neq 0$.
\end{lemma}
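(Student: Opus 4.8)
I would prove both implications simultaneously by computing the space $H^0(C\times X_2,\Lscr_2(b))$ through the variation-of-cohomology homomorphism and comparing the existence of a section with $f_0\neq 0$ directly against the defining equations of the osculating cone. The argument runs parallel to the proof of Lemma \ref{criterionTangentCone} and to \cite[Lemma 4]{KS88}: the first exact sequence in \eqref{shortExactSequence} controls the tangent cone through $\cup b$, and the second exact sequence contributes the order-$3$ obstruction encoded by the quadratic forms $q_{ij}$.

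First I would pull back the approximating homomorphism along the flat curve. By Theorem \ref{approxHom} and Corollary \ref{localEquationsW0d} there is a minimal homomorphism $\alpha\colon \OO{S}^{2}\to \OO{S}^{d-1}$ represented by the matrix $(f_{ij})$ whose $2\times 2$ minors cut out $W^0_d(C)|_S$. Applying the base-change isomorphism to the ring map $A\to A_2$ induced by $\exp_2(b)$, and using that in flat coordinates this curve is exactly the line $\varepsilon\mapsto b\varepsilon$ truncated at order $2$, the entry $f_{ij}$ pulls back to $l_{ij}(b)\varepsilon+q_{ij}(b)\varepsilon^2 \pmod{\varepsilon^3}$. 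Hence
$$H^0(C\times X_2,\Lscr_2(b))\;\cong\;\ker\left(\alpha_b\right),\qquad \alpha_b=L\varepsilon+Q\varepsilon^2,$$
where $L=(l_{ij}(b))$ and $Q=(q_{ij}(b))$ are $(d-1)\times 2$ matrices over $\mathbbm{k}$. Minimality gives $H^0(C,\Lscr)=\ker(\alpha\otimes k(\Lscr))=\mathbbm{k}^2$, so that the leading coefficient $f_0$ of a kernel element is precisely its image in $H^0(C,\Lscr)$, and $f_0\neq 0$ means the $\varepsilon^0$-coefficient is nonzero.

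Next I would run the kernel computation and match it to the cone. Writing a kernel element as $v_0+v_1\varepsilon+v_2\varepsilon^2$ and expanding $\alpha_b(v_0+v_1\varepsilon+v_2\varepsilon^2)=0\pmod{\varepsilon^3}$, the coefficient of $\varepsilon$ gives $Lv_0=0$ and the coefficient of $\varepsilon^2$ gives $Lv_1+Qv_0=0$, while $v_2$ is free. Thus a section with $f_0\neq 0$ exists if and only if there is $v_0\neq 0$ with
$$Lv_0=0\qquad\text{and}\qquad Qv_0\in\im(L).$$
The first condition forces $\rank L=1$ by Remark \ref{tangentCone}, i.e.\ $\mathbbm{k}\cdot b$ lies in the tangent cone. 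On the other side, by Definition \ref{oscCone} and the description of the cone following Remark \ref{tangentCone}, at a point $b$ of the tangent cone $\OC_3(W^0_d(C),\Lscr)$ is cut out by the order-$2$ and order-$3$ components of the $2\times 2$ minors of $(f_{ij})$; the order-$2$ components vanish at $b$ exactly when $\rank L\le 1$, and the order-$3$ component of the minor on rows $a,c$ evaluates at $b$ to $\left.\frac{d}{dt}\det(L+tQ)_{a,c}\right|_{t=0}$, the first-order variation of that minor of $L+tQ$. Requiring all these derivatives to vanish says every $2\times 2$ minor of $L+tQ$ is $O(t^2)$, i.e.\ that $L+tQ$ has rank $\le 1$ modulo $t^2$; choosing a basis $\{v_0,v_0'\}$ of $\mathbbm{k}^2$ with $Lv_0'$ spanning $\im(L)$, a direct computation shows this is equivalent to $Qv_0\in\im(L)$. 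This matches the kernel condition above and yields the equivalence.

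The main obstacle is the final matching step: one must correctly identify the scheme-theoretic equations of the osculating cone as the order-$2$ and order-$3$ components of the minors (checking that the additional components produced by multiplying minors by forms of positive order impose no new condition at points already in the tangent cone), and then verify that $\left.\frac{d}{dt}\det(L+tQ)|_{t=0}=0\right.$ is exactly the lifting obstruction $Qv_0\in\im(L)$ from the second sequence in \eqref{shortExactSequence}. Once the finite-dimensional translation of the first step is in place, the remaining linear algebra is routine, but the bookkeeping between the two descriptions of the cone is where care is needed.
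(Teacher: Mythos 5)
Your proof is correct and follows essentially the same route as the paper: both arguments pull the approximating homomorphism of Theorem \ref{approxHom} back along the flat curve $\exp_2(b)$ to $A_2=\mathbbm{k}[\varepsilon]/\varepsilon^3$, identify $H^0(C\times X_2,\Lscr_2(b))$ with the kernel of the resulting matrix $L\varepsilon+Q\varepsilon^2$, and reduce membership in the osculating cone to finite-dimensional linear algebra, using Remark \ref{tangentCone} to rule out $\rank(L)=0$. The only difference is presentational: where the paper brings $\varphi$ into the normal form $\mathrm{diag}(\varepsilon^u,\varepsilon^v)$ and asserts that $\mathbbm{k}\cdot b$ lies in $\OC_3(W^0_d(C),\Lscr)$ iff $u+v\geq 4$, you prove the corresponding statement directly, via the kernel conditions $Lv_0=0$ and $Qv_0\in\im(L)$ matched against the degree-$2$ and degree-$3$ components of the minors through $\frac{d}{dt}\det(L+tQ)\big|_{t=0}$, which in fact supplies the detail behind the paper's ``Hence.''
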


\begin{proof}
 Applying Theorem \ref{approxHom} to $\Lscr_2(b)$, there exists an approximating homomorphism of 
 $A_2$-modules 
 $$
 A_2^2 \stackrel{\varphi}{\longrightarrow} A_2^{d-1}
 $$
 such that $\ker (\varphi)=H^0(C\times X_2,\Lscr_2(b))$, $\coker (\varphi) = H^1(C\times X_2, \Lscr_2(b))$ and  
 the pullback of $W^0_d(C)$ via the flat curve $\exp_2(b): X_2 \longrightarrow \Pic(C)$ is given by 
 the maximal minors of $\varphi$. 
 In other words, $\varphi$ is the pullback of the matrix $(f_{ij})$ of Corollary \ref{localEquationsW0d}.
 The matrix $\varphi$ is equivalent to a matrix 
 $$
 \begin{pmatrix}
  \varepsilon^u & 0  & 0 & \cdots & 0\\
  0 & \varepsilon^v  & 0 & \cdots & 0\\ 
 \end{pmatrix}^T
 $$
 with $1 \leq u \leq v \leq 3$ since $\varphi \otimes k(\Lscr) = 0$. 
  \\
 Hence, the line $\mathbbm{k}\cdot b$ is contained in the osculating cone $\OC_3(W^0_d(C),\Lscr)$ if and only if $u+v\geq 4$. 
 Since the tangent cone is smooth and the point $b\neq 0$, the exponent $u=1$. 
 We conclude that the line $\mathbbm{k}\cdot b \in \OC_3(W^0_d(C),\Lscr)$ if and only if $v=3$. 
 Since $\varepsilon^3=0$, there exists a global section 
 $f=f_0 + f_1 \varepsilon + f_2 \varepsilon^2\in H^0(C\times X_2,\Lscr_2(b))$. 
  \\
 Restricting $\varphi$ to $\mathbbm{k} \cong A_0$, we get a commutative diagram
 $$
 \begin{xy}
  \xymatrix{
  0 \ar[r] & H^0(C\times X_2,\Lscr_2(b)) \ar[r] \ar[d] 
                    & A_2^2 \ar[r]^{\varphi} \ar[d] & A_2^{d-1} \\
  0 \ar[r] & H^0(C, \Lscr) \ar[r]^{\cong} & \mathbbm{k}^2 \ar[r]^{0} & \mathbbm{k}^{d-1} 
  }
 \end{xy}
 $$
 and thus, $f_0$ is nonzero in $H^0(C,\Lscr)$.
\end{proof}

Let $\mathbbm{k}\cdot b$ be a point in the tangent cone and 
let $f_0 + f_1 \varepsilon$ be the corresponding global section as in Lemma \ref{criterionTangentCone}. 
The following corollary answers the question of whether a second order deformation of the global section 
$f_0 \in H^0(C,\Lscr)$ is possible.

\begin{corollary}
\label{criterionOscCone2}
 Let $0\neq b = [\beta] \in H^1(C, \OO C)$ be a cohomology class, such that $\mathbbm{k}\cdot b$ lies in the tangent cone. 
 Then, $\mathbbm{k}\cdot b$ is contained in the osculating cone of order $3$ 
 if and only if the class $[\frac{f_0 \beta^2}{2}+ f_1\beta]$ is zero in $H^1(C,\Lscr) / (H^0(C,\Lscr)\cup [\beta])$ 
 for the section $f_0 + f_1 \varepsilon \in H^0(C\times X_1, \Lscr_1(b))$ of Lemma \ref{criterionTangentCone}.
\end{corollary}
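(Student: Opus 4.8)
The plan is to combine the criterion from Lemma \ref{criterionOscCone} with the explicit description of global sections of $\Lscr_2(b)$ from Example \ref{globalSectionOfL_2(b)}, and to interpret the resulting regularity conditions cohomologically via the exact sequences \eqref{shortExactSequence}. By Lemma \ref{criterionOscCone}, the point $\mathbbm{k}\cdot b$ lies in $\OC_3(W^0_d(C),\Lscr)$ if and only if the section $f_0+f_1\varepsilon\in H^0(C\times X_1,\Lscr_1(b))$ guaranteed by Lemma \ref{criterionTangentCone} extends to a section $f_0+f_1\varepsilon+f_2\varepsilon^2\in H^0(C\times X_2,\Lscr_2(b))$ with $f_0\neq 0$. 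So the entire content of the corollary is to identify the precise obstruction to lifting $f_0+f_1\varepsilon$ across the second short exact sequence $0\to\varepsilon^2\Lscr\to\Lscr_2(b)\to\Lscr_1(b)\to 0$.

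First I would apply the global section functor to the second sequence in \eqref{shortExactSequence} to obtain
$$
0\to H^0(C,\Lscr)\to H^0(C\times X_2,\Lscr_2(b))\to H^0(C\times X_1,\Lscr_1(b))\xrightarrow{\partial} H^1(C,\Lscr),
$$
where (as in the proof of Lemma \ref{criterionTangentCone}, invoking \cite[Lemma 10.6]{Kem83}) the coboundary $\partial$ is again computed by the cup-product structure of the deformation. Thus $f_0+f_1\varepsilon$ lifts to $\Lscr_2(b)$ precisely when $\partial(f_0+f_1\varepsilon)=0$ in $H^1(C,\Lscr)$. The next step is to compute this coboundary class explicitly. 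Using the description in Example \ref{globalSectionOfL_2(b)}, lifting means finding $f_2$ so that $f_2+f_1\beta_p+f_0\beta_p^2/2$ is regular at every $p$; the local obstruction to the existence of such $f_2$ is exactly the principal part of $f_1\beta+f_0\beta^2/2$, which by Lemma \ref{H0H1} represents the class $[\tfrac{f_0\beta^2}{2}+f_1\beta]\in H^1(C,\Lscr)$.

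The remaining point is to explain why this class need only vanish modulo $H^0(C,\Lscr)\cup[\beta]$ rather than on the nose, which is where the freedom in choosing $f_1$ enters. The section $f_1$ is determined by $f_0$ only up to adding an element of $H^0(C,\Lscr)$: if $f_1$ satisfies condition (b) of Example \ref{globalSectionOfL_2(b)}, so does $f_1+g$ for any $g\in H^0(C,\Lscr)$. Replacing $f_1$ by $f_1+g$ changes the obstruction class by $[g\beta]=g\cup[\beta]$, so the genuinely intrinsic obstruction is the image of $[\tfrac{f_0\beta^2}{2}+f_1\beta]$ in the quotient $H^1(C,\Lscr)/(H^0(C,\Lscr)\cup[\beta])$. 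The main obstacle I expect is the careful bookkeeping in this last step: one must verify that the indeterminacy in $f_1$ accounts for \emph{all} the ambiguity, i.e.\ that modding out by $H^0(C,\Lscr)\cup[\beta]$ is both necessary (different lifts give genuinely different $f_1$) and sufficient (no further lower-order freedom in $f_0$ can be exploited since $f_0$ is fixed up to scalar by Lemma \ref{criterionTangentCone}). Once this matching of the coboundary computation with the quotient is pinned down, the equivalence follows directly from Lemma \ref{criterionOscCone}.
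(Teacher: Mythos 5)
Your proposal is correct and follows essentially the same route as the paper's proof: both reduce via Lemma \ref{criterionOscCone} and the conditions of Example \ref{globalSectionOfL_2(b)} to the regularity of $f_2+(f_1+\widetilde{f})\beta_p+f_0\beta_p^2/2$ at all points, and both account for the quotient by $H^0(C,\Lscr)\cup[\beta]$ through exactly the ambiguity of $f_1$ modulo $H^0(C,\Lscr)$. The only cosmetic difference is that you phrase the obstruction as the connecting homomorphism of the long exact sequence of $0\to\varepsilon^2\Lscr\to\Lscr_2(b)\to\Lscr_1(b)\to 0$, whereas the paper manipulates principal parts directly via Lemma \ref{H0H1}.
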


\begin{proof}
The cohomology class $[\frac{f_0 \beta^2}{2} + f_1\beta]$ is zero in $H^1(C,\Lscr) / (H^0(C,\Lscr)\cup [\beta])$ 
if and only if $\frac{f_0 \beta^2}{2} + f_1\beta + f_2 + \widetilde{f} \beta$ 
is regular for an $\widetilde{f} \in H^0(C,\Lscr)$ and a rational section $f_2$. 
Taking Example \ref{globalSectionOfL_2(b)} into account, this condition
is satisfied if and only if 
$f_0 + (f_1+ \widetilde{f}) \varepsilon + f_2 \varepsilon^2 \in H^0(C\times X_2, \Lscr_2(b))$
since $f_0 + f_1 \varepsilon \in H^0(C\times X_1, \Lscr_1(b))$. 
Thus, it is equivalent for $b$ to be in the osculating cone of order $3$ by Lemma \ref{criterionOscCone}.
\end{proof}

In the proof of Theorem \ref{mainTheorem}, 
we will describe points in the fiber of the tangent cone over $\PP^1$ in terms of principal parts. 
We need the following lemma (see \cite[Section 3]{Kem86}). 

\begin{lemma}
\label{fiberTangentCone}
Let $f_0\in H^0(C,\Lscr)$ be a global section and let $D$ be its divisor of zeros. 
The fiber of the tangent cone $\overline{D}$ over the point $\mathbbm{k}\cdot f_0 \in \PP(H^0(C,\Lscr))$ is 
the projectivization of the kernel of  
$\cup f_0: H^1(C,\OO C) \rightarrow H^1(C,\OO C (D))$, denoted by $K(f_0)$. 
Furthermore, $K(f_0)$ is generated by cohomology classes of principal parts bounded by $D$, 
i.e., elements of $H^0(C,\OO C(D)|_{D})$ 
modulo the total principal part of elements in $H^0(C,\OO C(D))$.
\end{lemma}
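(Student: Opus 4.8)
The plan is to establish the two assertions in turn, the first from the cup-product description of the tangent cone in Remark \ref{tangentCone} and the second from a single short exact sequence of sheaves.

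For the first assertion, I would start from the fact, recorded in Remark \ref{tangentCone}, that a nonzero class $b\in H^1(C,\OO C)$ lies in the tangent cone precisely when $\cup b\colon H^0(C,\Lscr)\to H^1(C,\Lscr)$ has rank $1$, and that the projection $\pi$ records the line $\ker(\cup b)\subset H^0(C,\Lscr)$. Writing $D=\operatorname{div}(f_0)$, the fiber $\overline{D}$ over $\mathbbm{k}\cdot f_0$ therefore consists of those $\mathbbm{k}\cdot b$ with $f_0\in\ker(\cup b)$, i.e.\ with $b\cup f_0=0$. Reading the pairing $b\cup f_0$ as the value at $b$ of the homomorphism $\cup f_0\colon H^1(C,\OO C)\to H^1(C,\Lscr)$ induced by multiplication with the section $f_0$, this says exactly $b\in K(f_0)$; and under the isomorphism $\Lscr\cong\OO C(D)$ carrying $f_0$ to $1$, this $\cup f_0$ becomes the map on $H^1$ induced by the inclusion $\OO C\hookrightarrow\OO C(D)$, as in the statement. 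For the reverse containment I would observe that any $0\neq b\in K(f_0)$ satisfies $f_0\in\ker(\cup b)$, so $\cup b$ has rank $\leq 1$; surjectivity of $\mu_\Lscr$ (Lemma \ref{propertyCupProduct}) together with Remark \ref{tangentCone} forces the rank to equal $1$, so $\mathbbm{k}\cdot b$ is in the tangent cone and projects to $\mathbbm{k}\cdot f_0$. Hence $\overline{D}=\PP(K(f_0))$.

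For the second assertion, I would take the long exact cohomology sequence of
$$0\longrightarrow \OO C \longrightarrow \OO C(D)\longrightarrow \OO C(D)|_D\longrightarrow 0,$$
namely
$$0\to H^0(\OO C)\to H^0(\OO C(D))\to H^0(\OO C(D)|_D)\xrightarrow{\ \delta\ } H^1(\OO C)\xrightarrow{\ \cup f_0\ } H^1(\OO C(D)).$$
Exactness gives $K(f_0)=\ker(\cup f_0)=\im(\delta)$, while $\ker(\delta)$ is the image of the restriction map $H^0(\OO C(D))\to H^0(\OO C(D)|_D)$. Thus $K(f_0)\cong H^0(\OO C(D)|_D)$ modulo the total principal parts of global sections of $\OO C(D)$, which is the asserted description once $H^0(\OO C(D)|_D)$ is read as the space of principal parts bounded by $D$ and $\delta$ as the map sending a principal part to its class in $H^1(C,\OO C)$; this last reading is Lemma \ref{H0H1} for $\Mscr=\OO C$. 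As a numerical check the same sequence gives $\dim K(f_0)=\deg D-(h^0(\Lscr)-1)=d-1$, so $\PP(K(f_0))=\PP^{d-2}=\overline{D}$.

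The cohomological bookkeeping is routine; the point I expect to require the most care is the compatibility of the several incarnations of multiplication by $f_0$---the cup product on $H^0(C,\Lscr)$, the cup product on $H^1(C,\OO C)$, and the sheaf inclusion $\OO C\hookrightarrow\OO C(D)$---together with the identification of the connecting homomorphism $\delta$ with Kempf's principal-parts model of $H^1$ from Lemma \ref{H0H1}. Verifying that these all describe the same map, rather than any geometric subtlety, is the crux.
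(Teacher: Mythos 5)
Your proposal is correct and takes essentially the same route as the paper: the first statement via the rank-one cup-product description of the tangent cone from Remark \ref{tangentCone}, and the second via the long exact cohomology sequence of $0 \to \OO C \to \OO C(D) \to \OO C(D)|_D \to 0$, identifying the connecting map with Kempf's principal-parts model of $H^1$ (Lemma \ref{H0H1}). The paper's own proof is merely terser---it declares the first statement ``clear'' and expresses the compatibility you flag as the crux by the commutative diagram relating the restriction map $\rho$ to $\mathfrak{p}$---so your write-up just fills in the same argument in more detail.
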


\begin{proof}
 The first statement is clear by Remark \ref{tangentCone}. 
 For the second statement, we consider the short exact sequence 
 $0 \longrightarrow \OO C \longrightarrow \OO C(D) \longrightarrow \OO C(D)|_D \longrightarrow 0$
 and its long exact cohomology sequence 
 $$
 0 \rightarrow \mathbbm{k} \rightarrow H^0(C,\OO C(D)) \stackrel{\rho}{\longrightarrow} H^0(C,\OO C(D)|_D) 
   \rightarrow H^1(C, \OO C) \stackrel{\cup f_0}{\longrightarrow} H^1(C,\OO C(D)).
 $$
 Thus, $K(f_0)= \coker (H^0(C,\OO C(D)) \stackrel{\rho}{\longrightarrow}H^0(C,\OO C(D)|_D))$. 
 The map $\rho$ is the restriction of the map $\mathfrak{p}$ to the finite dimensional vector space $H^0(C,\OO C(D))$, i.e., 
 there is a commutative diagram 
 $$
 \begin{xy}
  \xymatrix{
  H^0(C,\OO C(D)) \ar[r]^{\rho} \ar@{^(->}[d] & H^0(C,\OO C(D)|_D) \ar@{^(->}[d] \\ 
  \Rat(\OO C(D)) \ar[r]^{\mathfrak{p}}& \Prin(\OO C(D)).
  }
 \end{xy}
 $$
 The second statement follows. 
\end{proof}

\section{Proof of the main theorem}
\label{proofOfTheMainTheorem}

Our proof is a modification of the proof in \cite[Section 4]{Kem86}. 
We fix the notation for the proof: 
Let   
\begin{center}
$
D=\sum\limits_{i=1}^{n} k_i p_i
$ 
\end{center}
be an arbitrary divisor in the linear system $|\Lscr|$, where $k_i\geq 1$ and $\sum_{i=1}^{n} k_i=d$. 
Let $f_0\in H^0(C,\Lscr)$ be the section whose divisor of zeros is exactly $D$ and let $(f_0,g_0)$ be 
a basis of $H^0(C,\Lscr)$. Let $h:=\frac{g_0}{f_0}\in H^0(C,\OO C(D)) \cong H^0(C,\Lscr)$. 
We now assume that each ramification point is tame, i.e., 
the ramification index $k_i\geq 2$ is coprime to the characteristic of $\mathbbm{k}$. 

\begin{proof}[Proof of Theorem \ref{mainTheorem}]
 We proceed as follows. We determine the condition on a point in the fiber to lie in the osculating cone and 
 reduce this condition to a system of equations. 
 To this end, we present the set of solutions as well as their geometry.
 
 \medskip

Let $b:=[\beta]\neq 0$ be an arbitrary point in the fiber $\overline{D}$ of the tangent cone, 
where $\beta=(\beta_p)_{p\in C} \in \Prin (\OO C(D))$. 
By Lemma \ref{fiberTangentCone}, we can choose the principal part $\beta$ such that 
$\beta_p$ is regular away from the support of $D$ and the pole order at $p_i$ is bounded by $k_i$, 
i.e., $\beta$ is spanned by elements of $H^0(C,\OO C(D)|_D)$.

\medskip

First of all, we state the condition that the point $b$ lies in the osculating cone. 
Since the line $\mathbbm{k}\cdot b$ spanned by $b$ is a point in a fiber $\overline{D}$ of the tangent cone, 
$f_0\beta$ is regular and $f_0 + 0\varepsilon$ is a global section of $H^0(C\times X_1, \Lscr_1(b))$ 
by Lemma \ref{criterionTangentCone}.
We can apply Corollary \ref{criterionOscCone2}. 
The point $\mathbbm{k}\cdot b\in \overline{D}$ is in the osculating cone $\OC_3(W^0_d(C),\Lscr)$ 
if and only if there exist sections $f_1,f_2\in \Rat(\Lscr)$ such that 
\begin{align}
\label{condition}
\frac{f_0\beta^2}{2} + f_1\beta + f_2 \text{\ \  is regular at $p_i$.}
\end{align}
Note that $f_1$ and $f_2$ are everywhere regular by Corollary \ref{criterionOscCone2} and Lemma \ref{propertyL2}, 
respectively. Thus, $f_1,f_2\in H^0(C,\Lscr)$ and 
the global section $f_1=a f_0 + c g_0$ is a linear combination of $f_0$ and $g_0$. 
Since $f_0\beta$ is regular, condition (\ref{condition}) simplifies: 
$$
\frac{f_0\beta^2}{2} + c g_0\beta \text{\ \ is regular at all $p_i$}
$$ 
for some constant $c\in \mathbbm{k}$.

Now, we reduce the condition to a simple system of equations. 
Since regularity is a local property, we study our condition at every single point $p_i$ of the support of $D$. 
To simplify notation we set $p:=p_i$ and $k:=k_i$.

Then, $\beta_{p}=\sum_{i=1}^{k} \lambda_i \beta_i$, 
where $\beta_i$ is the principal part of a rational function with pole of order $i$ at $p$. 
Hence, $\beta_1,\dots, \beta_k$ is a basis of $H^0(C,\OO C(D)|_{k\cdot p})$
and $\beta_{p}$ is an arbitrary linear combination of this basis. 

\medskip

We have to choose our basis of $H^0(C,\OO C(D)|_{k\cdot p})$ carefully 
in order to get the polar behaviour in condition (\ref{condition}) at $p$ under control. 
More precisely, we want to have equalities $\beta_j \beta_{k+i-j}=\beta_i \beta_k$ for $i\in\{1,\dots, k\}$ 
and $j\in\{i,\dots, k\}$.
 \\
An easy local computation shows the following claim which implies our desired equalities. 
Here, we need that $p$ is a tame point, i.e., the characteristic of $\mathbbm{k}$ does not divide $k$.

\begin{claim}
There exists a basis $\{\beta_i\}_{i=1,\dots, k}$ of $H^0(C,\OO C(D)|_{k\cdot p})$, 
i.e. the stalk of $\OO C(D)|_{k\cdot p}$ at $p$, 
satisfying the equations $\beta_k=h|_{p}$ and $\beta_i=(\beta_1)^i$ for all $i$.
\end{claim}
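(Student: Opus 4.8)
The plan is to reduce everything to a computation in the completed local ring at $p$ and to produce the basis by choosing a uniformizer adapted to $h$. First I would record that $h=g_0/f_0$ has a pole of order exactly $k$ at $p$: since $p=p_i$ occurs with multiplicity $k=k_i$ in the zero divisor $D$ of $f_0$ we have $\operatorname{ord}_p(f_0)=k$, while $g_0(p)\neq 0$ because $f_0,g_0$ have no common zero ($|\Lscr|$ being base point free by Lemma \ref{propertyCupProduct}). Writing $\OO C(D)_p=t^{-k}\OO{C,p}$ for a uniformizer $t$, the stalk $\OO C(D)|_{k\cdot p}$ is identified with $t^{-k}\OO{C,p}/\OO{C,p}$, a $k$-dimensional space whose elements are exactly the principal parts at $p$ of pole order at most $k$. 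Since this module has finite length it is unchanged by completion, so the whole computation may be carried out in $\mathbbm{k}[[t]]$, the completion of $\OO{C,p}$, equivalently in the local field $\mathbbm{k}((t))$.

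The heart of the argument is to replace $t$ by a uniformizer $s$ for which $h$ becomes the pure power $s^{-k}$. The function $h^{-1}$ lies in $\OO{C,p}$ with $\operatorname{ord}_p(h^{-1})=k$, so in the completion $h^{-1}=t^k u$ with $u$ a unit. This is the only place tameness enters: because $\operatorname{char}(\mathbbm{k})$ does not divide $k$, the polynomial $X^k-u$ is separable modulo $t$ and, $\mathbbm{k}$ being algebraically closed, $u(0)$ has a $k$-th root; Hensel's lemma then produces $u^{1/k}\in\mathbbm{k}[[t]]^\times$ (in fact only its truncation modulo $t^k$ is needed, so no convergence issue arises). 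Setting $s:=t\,u^{1/k}$ gives a uniformizer with $h^{-1}=s^{k}$, i.e. $h=s^{-k}$ in $\mathbbm{k}((s))$.

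With $s$ in hand I would set $\beta_i:=s^{-i}$ for $i=1,\dots,k$. Their images in $t^{-k}\mathbbm{k}[[t]]/\mathbbm{k}[[t]]=s^{-k}\mathbbm{k}[[s]]/\mathbbm{k}[[s]]$ are $s^{-1},\dots,s^{-k}$, hence form a basis of $H^0(C,\OO C(D)|_{k\cdot p})$ with $\beta_i$ of pole order exactly $i$. By construction $\beta_i=(s^{-1})^i=(\beta_1)^i$ identically, and $\beta_k=s^{-k}=h$, so $\beta_k=h|_p$ after truncation; these are the two asserted equations. The product relations stated just before the claim then follow formally, since $\beta_j\beta_{k+i-j}=(\beta_1)^{j}(\beta_1)^{k+i-j}=(\beta_1)^{k+i}=(\beta_1)^i(\beta_1)^k=\beta_i\beta_k$ for $i\in\{1,\dots,k\}$ and $j\in\{i,\dots,k\}$, these being genuine equalities in $\mathbbm{k}((s))$ rather than merely modulo $\OO{C,p}$, exactly as is needed to control the polar part of $\frac{f_0\beta^2}{2}+cg_0\beta$ in condition (\ref{condition}). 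The single delicate point, and the only real obstacle, is the extraction of the $k$-th root of $u$, which genuinely fails in the wild case $\operatorname{char}(\mathbbm{k})\mid k$; the remaining verifications are formal identities in a power-series ring.
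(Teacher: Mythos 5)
Your proof is correct, and it turns on the same key point as the paper's: tameness, i.e.\ $\operatorname{char}(\mathbbm{k})\nmid k$, is exactly what allows one to extract a $k$-th root so that the principal part of $h$ at $p$ becomes a perfect $k$-th power of a simple pole. The implementation, however, is genuinely different. The paper never leaves the space of principal parts of rational functions: after rescaling the local parameter $t$ so that $h|_p=(c_1t^{-1}+\dots+c_{k-1}t^{-k+1}+t^{-k})|_p$, it defines $\beta_1$ as the \emph{truncated} binomial expansion of $\sqrt[k]{F}$ with $F=c_1t^{-1}+\dots+t^{-k}$, sets $\beta_i:=\beta_1^i$ for $i<k$ and $\beta_k:=h|_p$, and the relation $\beta_k=\beta_1^k$ then holds only modulo $\Rat_p(\OO C(D))$ --- which is all that the regularity computation in condition (\ref{condition}) requires. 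You instead pass to the completion, invoke Hensel's lemma to produce $u^{1/k}\in\mathbbm{k}[[t]]^{\times}$ (using that $X^k-u(0)$ has a simple root when $k$ is invertible and $\mathbbm{k}$ is algebraically closed), and change the uniformizer to $s=t\,u^{1/k}$, so that $h=s^{-k}$ and the basis $\beta_i:=s^{-i}$ satisfies $\beta_i=\beta_1^i$ and $\beta_k=h$ \emph{exactly} in $\mathbbm{k}((s))$. What your route buys is exact identities and no coefficient bookkeeping; what it costs is that your $\beta_i$ are formal Laurent series rather than principal parts of honest rational functions. This is harmless here, as you indicate: $H^0(C,\OO C(D)|_{k\cdot p})$ has finite length, hence is unchanged under completion, and regularity at $p$ can be tested in $\mathbbm{k}[[t]]$, so the later manipulations in the proof of Theorem \ref{mainTheorem} go through with truncated (rational) representatives, the needed identities then holding modulo regular functions just as in the paper. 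Note finally that the two uses of tameness are the same condition in disguise: you need $ka^{k-1}\neq 0$ for Hensel's lemma, while the paper needs the binomial coefficients $\binom{1/k}{j}$ to be defined.
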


\begin{proof}\renewcommand{\qedsymbol}{}
A basis of the stalk of $\OO C(D)|_{k\cdot p}$ at $p$ is the images of $t^{-1}, t^{-2}, \dots, t^{-k}$ under 
$\mathfrak{p}:\Rat(\OO C(D))\to \Prin(\OO C(D))$ where $t$ is a local parameter function which vanishes simply at $p$.  
After rescaling our local parameter, we may assume that $\beta_k:=h|_{p}$ is of the form
$$
\beta_k=(c_1 t^{-1} + c_2 t^{-2} + \dots + c_{k-1} t^{-k+1} + t^{-k})|_{p}\in \OO C(D)|_{k\cdot p}
$$
for some constants $c_1,\dots,c_{k-1}\in \mathbbm{k}$. 
We define $\beta_1$ to be the expansion of the $k$-th root of the rational function  
$F:=c_1 t^{-1} + c_2 t^{-2} + \dots + c_{k-1} t^{-k+1} + t^{-k}$ up to some order. 
Therefore, we need the assumption that the characteristic of $\mathbbm{k}$ does not divide $k$. 
To be more precise, let  
\begin{align*}
\sqrt[k]{F} & = \sqrt[k]{c_1 t^{k-1} +  \dots + c_{k-1} t + 1}\cdot \frac{1}{t} \\
& = \frac{1}{t}\cdot \left(\sum_{j=0}^{k-1}\binom{1/k}{j}(c_1 t^{k-1} + \dots + c_{k-1} t)^j + h.o.t \right) 
\end{align*}
be the expansion of $\sqrt[k]{F}$.
We define 
$$
\beta_1:=
\frac{1}{t}\left(\sum_{j=0}^{k-1}\binom{1/k}{j}(c_1 t^{k-1} + c_2 t^{k-2} + \dots + c_{k-1} t)^j\right)\bigg \vert _{p}
$$
and $\beta_i:=(\beta_1)^i$ for $i=1,\dots, k-1$. 
Note that $\beta_i$ is the principal part of a rational function with pole of order $i$ and 
$\beta_1^k-\beta_k \in \Rat_p (\OO C(D))$. 
Hence, $\{\beta_i\}_{i=1,\dots, k}$ form a basis and $\beta_k=(\beta_1)^k$.
\end{proof}

Recall that $f_0 \beta_k=f_0\cdot\frac{g_0}{f_0}|_p=g_0|_p$ and 
$f_0 \beta_i\beta_j$ is regular for $i+j\leq k$.
Using our careful choice of $\beta_i$, condition (\ref{condition}) is fulfilled at the point $p$ if and only if  

\begin{align*}
\label{localCondition}
\frac{f_0(\beta_p)^2}{2} + c g_0\beta_p
= &\frac{f_0}{2}\left(\sum\limits_{i=1}^{k} \lambda_i \beta_i\right)^2 + 
c g_0 \left(\sum\limits_{i=1}^{k} \lambda_i \beta_i\right)\\
= &\frac{f_0}{2}\left(\sum\limits_{i=1}^{k}\sum\limits_{j=1}^{k} \lambda_i\lambda_j \beta_i \beta_j\right) + 
c g_0 \left(\sum\limits_{i=1}^{k} \lambda_i \beta_i\right) \\
= &\frac{f_0}{2}\left(\sum\limits_{i=1}^{k}\sum\limits_{j=i}^{k} \lambda_j\lambda_{k+i-j} \beta_j \beta_{k+i-j}\right) + 
c g_0 \left(\sum\limits_{i=1}^{k} \lambda_i \beta_i\right) \\ 
= &\frac{f_0}{2} \beta_k \left(\sum\limits_{i=1}^{k}
\left(\sum\limits_{j=i}^{k} \lambda_j\lambda_{k+i-j}\right)\beta_i \right) + 
c g_0 \left(\sum\limits_{i=1}^{k} \lambda_i \beta_i\right) \\ 
= &\frac{g_0}{2}\left( \sum_{i=1}^{k} 
\left(\sum_{j=i}^{k} \lambda_j \lambda_{k+i-j} + 2c\lambda_i \right)\beta_i\right) \in \OO C(D)|_{k\cdot p} \\
  & \text{ is regular at $p$.} 
\end{align*}
Since the global section $g_0$ does not vanish at $p\in \Supp(D)$, 
condition (\ref{condition}) is regular at $p$ if and only if 
\vspace{-3mm}
\begin{center}
$
\sum\limits_{j=i}^{k} \lambda_j \lambda_{k+i-j} + 2c\lambda_i=0
$
\end{center}
\vspace{-3mm}
for all $i=1,\dots,k$. 

At the end, we have to solve this system of equations and describe the geometry. 
Let $\lambda_{i}, c$ be a solution of the equations.
In order to relate a solution to its geometry, we distinguish two cases. 
The second case is only relevant if the multiplicity $k\geq 2$.

\begin{case}
\label{case1}
If $c \neq 0$, then either $\beta_p = -2c \beta_k=-2c\cdot h|_{p}$ or $\beta_p=0$. 
Geometrically, either $\mathbbm{k}\cdot [\beta_p] = \mathbbm{k}\cdot [h|_p] \in \overline{k\cdot p}$ 
or $[\beta_p]=0$. 
\end{case}

\begin{proof}\renewcommand{\qedsymbol}{} Let $c$ be a nonzero constant. 
We consider the equation 
\begin{center}
$
\lambda_k^2+2c\lambda_k=0.
$
\end{center}
Hence, either $\lambda_k=-2c$ or $\lambda_k=0$. For both solutions, the equation 
\begin{center}
$
2\lambda_{k-1}\lambda_k + 2c\lambda_{k-1}=0
$
\end{center}
implies that $\lambda_{k-1}=0$. Inductively, $\lambda_i=0$ for $1\leq i \leq k-1$, which proves our claim.  
\end{proof}
\begin{case}
\label{case2}
If $c = 0$, then $\beta_p = \sum\limits_{i=1}^{\lfloor \frac{k}{2} \rfloor} \lambda_i \beta_i$ 
is an arbitrary linear combination. 
Geometrically, $\mathbbm{k}\cdot [\beta_p] \in \overline{\lfloor \frac{k}{2} \rfloor \cdot p}$.
\end{case}

\begin{proof}\renewcommand{\qedsymbol}{} 
If $c=0$, then the system of equation reduces to 
\begin{align*}
 &\begin{cases} \lambda_k^2=0 & \hspace{7.7cm} (i\!=\!k) \\
                \lambda_{k-1}\lambda_k + \lambda_k\lambda_{k-1} = 0 & \hspace{7.7cm}  (i\!=\!k\!-\!1) 
  \end{cases} \\
 &\begin{cases}\lambda_{k-2}\lambda_k + \lambda_{k-1}^2 + \lambda_k\lambda_{k-2}=0 & \hspace{3.5cm} (i\!=\!k\!-\!2) \\
               \lambda_{k-3}\lambda_k + \lambda_{k-2} \lambda_{k-1} + \lambda_{k-1}\lambda_{k-2} + 
               \lambda_k\lambda_{k-3} = 0 & \hspace{3.5cm} (i\!=\!k\!-\!3)
  \end{cases} \\ 
 & \ \ \ \ \ \ \vdots \\ 
 &\begin{cases}\lambda_{k-2l}\lambda_k + \dots + \lambda_{k-l}^2 + \dots + \lambda_k\lambda_{k-2l}=0 & (i\!=\!k\!-\!2l) \\
               \lambda_{k-2l-1}\lambda_k + \dots + \lambda_{k-l-1} \lambda_{k-l} + 
               \lambda_{k-l}\lambda_{k-l-1} + \dots + \lambda_k\lambda_{k-2l-1} = 0 & (i\!=\!k\!-\!2l\!-\!1)
  \end{cases} \\ 
 & \ \ \ \ \ \ \vdots \\
 & \ \ \ \lambda_{1}\lambda_k + \dots + \lambda_{\lfloor \frac{k+1}{2} \rfloor} \lambda_{\lceil \frac{k+1}{2} \rceil} + 
        + \dots + \lambda_k\lambda_{1} = 0 \hspace{3.9cm}  (i\!=\!1). 
\end{align*}

Thus, $\lambda_k=0$ solve the first two equations. The second pair now gives $\lambda_k=\lambda_{k-1}=0$.
Inductively, we obtain $\lambda_{k-l}=0$ for all $l$ with $k-2l\geq 1$. 
Whether the last equation gives a condition depends on the parity of $k$. 
Thus 
$\lambda_k=\dots=\lambda_{\lfloor \frac{k}{2} \rfloor +1} = 0$ and 
$\lambda_1,\dots, \lambda_{\lfloor \frac{k}{2} \rfloor}$ arbitrary is the solution of the system of equations. 
\end{proof}

This completes the local study of condition (\ref{condition}).
We now make use of the local description of $\beta$ which leads to a global picture. 
The principal part $\beta=\sum_{i=1}^n \beta_{p_i}$ is supported on $D$. 
Since the constant $c$ is the same for all local computations,   
either all principal parts $\beta_{p_i}$ satisfy Case \ref{case1} or all principal parts satisfy Case \ref{case2}. 

By Lemma \ref{fiberTangentCone}, the total principal part of $h$ yields a relation
\begin{align}
 \bigg[\sum_{i=1}^{n} h|_{p_i}\bigg]=0 \tag{$\ast$}.  
\end{align}

If we are in Case \ref{case1}, let 
$\emptyset\neq I\subsetneq\{1,\dots, n\}$ be the set of indices, where $[\beta_{p_i}]=[h|_{p_i}]\neq 0$ 
and let $I^c=\{1,\dots, n\}\backslash I$ be its complement, 
then the nonzero point $b=[\beta]$ lies in the osculating cone if and only if 
$$
\mathbbm{k}\cdot b = \mathbbm{k}\cdot\bigg[\sum\limits_{i\in I} h|_{p_i}\bigg] \stackrel{(\ast)}{=} 
\mathbbm{k}\cdot \bigg[\sum\limits_{i\in I^c} h|_{p_i}\bigg] 
\in \underbrace{\overline{\sum\limits_{i\in I} k_i p_i}}_{=:D_1}\bigcap 
    \underbrace{\overline{\sum\limits_{i\in I^c} k_i p_i}}_{=:D_2},
$$ 
In Case \ref{case2}, the nonzero point $b=[\beta]$ lies in the osculating cone if and only if
$$
\mathbbm{k} \cdot b \in \overline{\sum\limits_{i=1}^n \bigg\lfloor \frac{k_i}{2} \bigg\rfloor p_i} = 
\underbrace{\overline{\sum\limits_{i=1}^n \bigg\lfloor \frac{k_i}{2} \bigg\rfloor p_i}}_{=:D_1}\bigcap 
\underbrace{\overline{\sum\limits_{i=1}^n \bigg\lceil \frac{k_i}{2} \bigg\rceil p_i}}_{=:D_2}.
$$ 
Note that $\overline{\sum_{i=1}^n \lfloor \frac{k_i}{2} \rfloor p_i}$ is the greatest 
linear span of osculating spaces to $C$ at $p_i$ which 
can be expressed as an intersection $\overline{D_1}\cap \overline{D_2}$ of a nonzero effective decomposition $D=D_1+D_2$. 
Our theorem follows. 
\end{proof}

\begin{remark}
The two cases, appearing in the proof, contribute differently to the osculating cone. 
From the first case, we always get $2^{n-1}-1$ points in the osculating cone. 
If the curve $C$ has at least one ramification point in the fiber, 
there is a $(\sum_{i=1}^n \lfloor \frac{k_i}{2} \rfloor-1)$-dimensional component in the fiber by the second case. 
The osculating cone has a higher-dimensional component in a fiber $\overline{D}$ of the tangent cone 
unless $D=p_1+\dots+p_d$ or $D=2p_1+p_2+\dots+p_{d-1}$ or $D=3p_1+p_2+\dots+p_{d-2}$. 
\end{remark}

\begin{example}[Example \ref{exampleOC} continued]
 In a fiber over a general point $(\lambda: \mu)\in \PP^1$, the curve $C$ is unramified. 
 In a general fiber, which is a projective plane, 
 the four distinct points of $C$ determine three pairs of connection lines and 
 hence, three corresponding intersection points. 
 Thus, all intersection points, associated to the pencil $|\Lscr|$ on $C$, sweep out a trigonal curve $C_2$ 
 (see also \cite[Section 12.7]{BL}). 
 We assume that $\pi_C: C\to \PP^1$ has only simple branch points. 
 By the Riemann-Hurwitz formula 
 $$
 2\cdot 6-2 - 4\cdot(2\cdot 0-2) = 18, 
 $$
 the curve $C$ has $18$ simple ramification points.  
 Applying Theorem \ref{mainTheorem} for $D=2p_1+p_2+p_3$, 
 there are $4$ points in a special fiber as mentioned above. 

 
By the geometry of these points, every simple ramification point of $C$ is a simple ramification point of $C_2$, too. 
Hence, the genus $g(C_2)$ of $C_2$ is 
$$
g(C_2)= \frac{3\cdot(-2) + 18}{2} +1 = 7 
$$
by the Riemann-Hurwitz formula.
Furthermore, $C$ and $C_2$ intersect transversally since 
\begin{align*}
 \chi(C\cap C_2) =  & \chi(C) + \chi(C_2) - \chi(C\cup C_2)  \\ 
                =  & \chi(C) + \chi(C_2) - \chi(\OC_3(W^0_4(C),\Lscr)) = 1-6+ 1-7 - (1-30)= 18.
\end{align*} 
The osculating cone is thus the union of two transversal intersecting curves of genus $6$ and genus $7$. 

Note that the space of the six connection lines induce 
an \'etale double cover $\widetilde{C_2} \to C_2$ of the trigonal curve $C_2$. 
By Recillas' theorem (\cite{R74}), the Jacobian of $C$ and the Prym variety associated to $\widetilde{C_2} \to C_2$ 
are isomorphic.
\end{example}

\begin{remark}
We now consider a general curve of arbitrary genus $g\geq 5$ and 
a point $\Lscr$ of a Brill-Noether locus $W^1_d(C)$ of dimension at least $1$. 
Then, Lemma \ref{criterionTangentCone} and Lemma \ref{criterionOscCone} are still true.   
With the methods used in the proof of the main theorem, one can show that for a general divisor $D\in |\Lscr|$, 
the intersection of the osculating cone and the linear span $\overline{D}$ contains 
all intersection points of the form $\overline{D_1}\cap \overline{D_2}$ 
for any decomposition $D=D_1+D_2$ into nonzero effective divisors. 
\end{remark}


 

\bibliography{papers}{}
\bibliographystyle{alpha}

\end{document}